\def\subparagraph{\@startsection{paragraph}{5}{2\parindent}{0ex plus 0.1ex minus 0.1ex}%
{0ex}{\normalfont\small\itshape}}%
\def\URL#1#2{\@ifundefined{href}{#2}{\href{#1}{#2}}}
\def\UrlOrds{\do\*\do\-\do\~\do\'\do\"\do\-}%
\g@addto@macro{\UrlBreaks}{\UrlOrds}
\renewenvironment{abstract}
	{\trivlist\item[]\leftskip0pt\par\vskip4pt\noindent
  	\textbf{\abstractname}\mbox{\null}\\}
	{\par\noindent\endtrivlist}
\def\keywords#1{\par\medskip\par\noindent\textbf{Keywords}: #1\par}
\date{} \emergencystretch 8pt
\def\author#1{\gdef\@author{\hskip-\tabcolsep%
	\parbox{\textwidth}{\raggedright\bfseries#1\\[1pc]}}}
\def\address[#1]#2{\g@addto@macro\@author{\\\hskip-\tabcolsep\parbox{\textwidth}{\raggedright%
	\normalsize\normalfont\textsuperscript{#1}#2}}}
\let\addresslink\textsuperscript
\def\correspondence#1{\g@addto@macro\@author{\\\hskip-\tabcolsep\parbox{\textwidth}{\raggedright%
	\vspace*{10pt}\normalsize\normalfont~\\#1~\\[12pt]}}}
\def\email#1{\g@addto@macro\@author{\\\hskip-\tabcolsep\parbox{\textwidth}{\raggedright%
	\normalsize\normalfont Emails: #1}}}
\def\title#1{\gdef\@title{\vspace*{-30pt}%
	\raggedright\textbf{\@journaltitle}~\\%
  \raggedright\bfseries\ifx\@articleType\@empty\vspace*{20pt}\else%
  \vspace*{20pt}\@articleType\vspace*{20pt}\\\fi#1}}
\let\@journaltitle\@empty \def\journaltitle#1{\gdef\@journaltitle{{\normalfont\itshape#1}}}
\let\@articleType\@empty \def\articletype#1{\gdef\@articleType{{\normalfont\itshape#1}}}
\let\@runningHead\@empty \def\RunningHead#1{\gdef\@runningHead{{\normalfont #1}}}
\theoremstyle{plain}% Theorem-like structures provided by amsthm.sty
\newtheorem{theorem}{Theorem}%[section]
\newtheorem{corollary}[theorem]{Corollary}
\newtheorem{proposition}[theorem]{Proposition}
\newcommand{\RNum}[1]{\uppercase\expandafter{\romannumeral #1\relax}}
\theoremstyle{definition}
\newtheorem{definition}[theorem]{Definition}
\theoremstyle{remark}
\newtheorem{remark}{Remark}
\begin{document}

% Title of the document
\title{The Stability of $\alpha-$ Harmonic Maps with Physical Applications}

% Author names
\author{%
		Seyed Mehdi Kazemi Torbaghan\addresslink{1}
		and
  	 Keyvan Salehi\addresslink{2} 
  }
		
% Affiliation
\address[1]{ Department of Mathematics, Faculty of Basic Sciences, University of Bojnord,  Bojnord,  Iran;}
\address[2]{Central of Theoretical Chemistry And Physics (CTCP), Massey University, Auckland, New Zealand;}
%\address[3]{Department of Mathematics, Faculty of Sciences, Urmia University, Urmia, Iran.}

% Corresponding author details
\correspondence{Correspondence should be addressed to 
  Seyed Mehdi Kazemi Torbaghan: m.kazemi@ub.ac.ir}

% Emails of authors
\email{Keyvan Salehi(k.salehi@massey.ac.nz)}

% Running Head
\RunningHead{ The Stability of $\alpha-$ Harmonic Maps with Physical Applications}

\maketitle 

% Abstract
\begin{abstract} 
The first result in this study is a non-existence theorem for $\alpha-$harmonic mappings. Additionally,   a direct connection between the $\alpha-$ harmonic and harmonic maps is made possible via conformal deformation. Second,  the instability of non-constant $\alpha$-harmonic maps is investigated with regard to the target manifold's Ricci curvature requirements.  Next,  the concept of $\alpha-$stable manifolds and their physical applications are explored.  Finally,  it is investigated the $\alpha-$stability of compact Riemannian manifolds that admit a non-isometric conformal vector field as well as the Einstein Riemannian manifolds under certain assumption on   the smallest positive  eigenvalue of its Laplacian operator on functions.

%
%we investigate the concepts of $\alpha-$ energy functional and $\alpha-$ harmonic maps, and it is provided an explicit relationship between $\alpha-$ harmonic maps and harmonic maps via conformal deformation. 
%Furthermore,  
%we investigate the existence of $\alpha-$harmonic mappings.  It is demonstrated that, under certain assumptions,  every smooth map between Riemannian manifolds can be deformed  into an $\alpha-$harmonic map. 
%Moreover,  
%by reviewing the concepts of conformal and killing vector fields and  their physical applications,   a non-existence theorem for $\alpha-$harmonic maps is given.  
%
%we  first study 
%the $\alpha-$energy functional,  Euler-Lagrange operator and $\alpha$-stress energy tensor.   Second,  it is shown that the critical points of $\alpha-$ energy functional are explicitly related to harmonic maps through conformal deformation.   In addition,  an $\alpha-$harmonic map is constructed from any smooth map between Riemannian manifolds under certain assumptions.   Next,  we determine the conditions under which the fibres of horizontally conformal $\alpha-$ harmonic maps are minimal submanifolds.   Then,  the stability of any $\alpha-$harmonic map from a Riemannian manifold to a Riemannian manifold with non-positive Riemannian curvature is demonstrated.  Finally,   the instability of $\alpha-$harmonic maps from a compact manifold to a standard unit sphere is investigated. 
\keywords{Harmonic Maps; Calculus of Variations; Riemannian Geometry; Stability.}
\end{abstract}
\section{Introduction}
\noindent

Mathematical physicist have devoted a great deal of study to the theory of harmonic mappings between Riemannian manifolds,  \cite{hhhh,s1, D1}.  Let $\psi:(M,g)\longrightarrow (N,h)$ be a smooth map from a compact Riemannian manifold $(M,g)$  to an arbitrary Riemannian manifold $(N,h)$.  The energy functional of $\psi$ is defined as follows
\begin{equation}
E(\psi)=\frac{1}{2} \int _{M} \mid d\psi \mid ^2 dV _{g},
\end{equation}
where 
$dV_{g}$
is the volume element of $(M,g)$.  A smooth map $\psi$ is said to be harmonic if $E$ is stationary at $\psi$.  This is equivalent to saying that $\psi$ satisfies the Euler-Lagrange equation corresponding to $E$ given by 
\begin{equation}\label{123er}
\tau (\psi):= Tr_{g } \,\nabla d \psi =0,
\end{equation}
here $\nabla $ is the induced connection on the pull-back bundle $\psi ^{-1}TN$. 
The section $\tau(\psi)\in \Gamma (\psi ^{-1}TN)$ described in \eqref{123er},  is known as the \textit{tension field} of $\psi$.
Eells and Sampson \cite{js} originally established the fundamental existence theorem for  harmonic maps under the premise that the sectional curvature of the target manifold  is non-positive.  However,   if the sectional curvature of the target manifold  is positive,   then the heat flow equation may blow up in finite time, \cite{cd}.  \\

  Recently,   several contributions about harmonic maps have been made by mathematical physicist,  \cite{a1,cd,D1,m1}. This is owing to the application of harmonic maps in numerous theories in mathematical physics, such as liquid crystal, ferromagnetic material, and superconductors.  For instance,  the solutions to the Landau-Lifshitz equation are equal to the harmonic map equation.
Furthermore,  given certain assumptions, it is possible to demonstrate that the Landau-Lifshitz equation has the same form as the heat flow for harmonic maps, \cite{h}. It is worth mentioning that Landau-Lifshitz first calculated the Landau-Lifshitz equation on phenomenological grounds in \cite{ll} and that it has played an important role in the understanding of non-equilibrium magnetism, \cite{cd}. \\

Harmonic maps from closed Riemannian surfaces and their variants are useful in both mathematics and physics as instruments for probing the geometry of a Riemannian manifold.  In physics,  they are  borderline examples for the Palais- Smale condition and so can not be reached directly using normal techniques.   For solving,  Sacks and Uhlenbeck in their pioneering paper \cite{542} in 1981,  introduced perturbed energy functional that satisfies the Palais- Smale condition,  and thus obtained so-called $\alpha$-harmonic maps as critical points of perturbed functional to approximate harmonic maps. They also demonstrated  that if $M$ is a compact Riemannian manifold, then any non-trivial class in $\pi_{2}(M)$ can be represented by a sum of smooth harmonic mappings,
$\psi_{j}: \mathbb{S}^{2}\longrightarrow M,  \, j=1,\cdots ,n$, for some positive integer $n$, \cite{542}. \\

%To get the existence of harmonic maps,  Sacks and Uhlenbeck  consider the convergence of
%$\alpha-$harmonic maps when $\alpha$ decreases to 1.  In general,  there are bubbles (harmonic spheres)
%preventing the smooth convergence of $•\alpha-$harmonic maps. Fortunately,  a non-positive curvature condition on the target manifold can exclude such bubbles, and they can therefore obtain
%the existence of harmonic maps into such manifolds in any given homotopy class.
% Also,  by $\alpha-$energy functional,   they proved that if $M$ is a compact Riemannian manifold,  then each non-trivial class in $\pi_{2}(M)$ can  be represented by a sum of smooth harmonic maps 
%$\psi_{j}: \mathbb{S}^{2}\longrightarrow M,  \, j=1,\cdots ,n$, for some positive integer $n$,                  \cite{542}. \\

 For $\alpha >1$, the \textit{$\alpha-$ energy functional} of the map $\psi$ is denoted by $E_{\alpha}(\psi)$ and defined as follows:
\begin{equation}
E_{\alpha}(\psi):=\int _{M}(1+\mid d\psi \mid ^{2})^{\alpha}dV _{g},
\end{equation}
where
$\mid d\psi \mid^{2}$
 denotes the Hilbert-Schmidt norm of the differential map $ d\psi \in \Gamma (T^{*}M\times \psi^{-1}TN)$ 
 with respect to $g$ and $h$. 
 This function can be seen as a perturbation of the energy functional $E$,  that is,  in contrast to $E$,  satisfying the Palais-Smale condition.  Moreover,  
$E_{\alpha}$ satisfies the
Morse theory and Ljusternik- Schnirelman theory, \cite{s1}.  
\\

Much research has been undertaken on $\alpha-$  harmonic maps.  For example, the authors of \cite{s2}  examined the convergence behavior of a sequence of $\alpha-$harmonic mappings $\psi_{\alpha} $ with $E_{\alpha}(\psi_{\alpha})<C$,  from a compact surface $(M,g)$ to a compact without boundary Riemannian manifold $(N,h)$.  It is worth noting that this sequence converges weakly to a harmonic map. In \cite{3s},  it was demonstrated that for a sequence of minimizing $\alpha$-harmonic mappings,  the necks converge to certain limit geodesics of limited length, implying that the  identity stays stable.  Furthermore,  it can be seen reference
\cite{7s} for an examination of energy-reducing sequences in homotopy classes. \cite{14s} for the energy identity of a sequence of $\alpha$-harmonic maps with an additional entropy-type condition and  \cite{s1},  for  the energy identity of  a sequence of $\alpha$-harmonic maps into  specific unique target manifolds.\\

	In terms of mathematical physics,
%$\alpha -$
%harmonic maps 
%are used to undertake essential work in the
$\alpha-$harmonic maps
 are used to undertake essential work in the 
  theory of gauge fields,  as a generalization of the theory of classical electromagnetic fields,  that underpins the standard model of particle physics,  \cite{3s, 14s}.  For example,  the theories of $\alpha-$harmonic maps are employed as mathematical tools to demonstrate how soap films arrange themselves into form  that minimizing  their energy.   However,  these theories  had been tainted by the occurrence of points where energy appears to become infinitely concentrated.  Uhlenbeck studied  these points and  showed  that this is caused by a new bubble  splitting off the surface,   \cite{es}.  In 2019,  K. Uhlenbeck,  as the first woman,  won prestigious Abel prize for her prominent works on $ \alpha- $ harmonic maps from minimal surfaces.\\

  In this work,  we investigate the stability of $\alpha$-harmonic maps and $\alpha$-stability of Riemannian manifolds,  as well as their practical applications,  using the ideas of \cite{a1, a11, ch1999,c2018,H,hhhh, 542}.  In this regard,  the instability of non-constant $\alpha$-harmonic mappings with respect to the Ricci curvature criterion of the target manifold is studied.  Also,  the concept of $\alpha-$stable manifolds and their physical applications are investigated.  It is further probe the conditions under which  the  compact Riemannian manifolds admitting a non-isometric conformal vector field and Einstein Riemannian manifolds are $\alpha-$stable. Sections 3 and 4 summarize our significant findings. \\

This manuscript is arranged as follows.\\ Section 2 delves into the notions  of $\alpha-$ energy functional and $\alpha-$ harmonic maps,  and an explicit link between $\alpha-$ harmonic  and harmonic maps via conformal deformation is offered. 
In addition,  we look into the existence of 
$\alpha-$
 smooth map between  Riemannian
 manifolds.  On the other hand,  by reviewing the concepts of conformal and killing vector fields and their physical applications,  a 
 non-existence theorem for $\alpha$-harmonic maps is given. 
Section 3 investigates the second variation formula of the $\alpha-$energy functional,  as well as the notion of the stability of $\alpha$-harmonic maps  with regard to the Ricci curvature criterion of the target manifold. 
The concept of $\alpha-$stable manifolds and its practical applications are discussed in the final section.  In this regard,  the $\alpha-$instability of compact Riemannian manifolds admitting a non-isometric conformal vector field is then examined.   It is further probe the conditions under which any compact Riemannian manifolds admitting  killing vector fields is $\alpha-$stable.   Finally,  the stability of any  Einstein Riemannian manifold  is studied by making an assumption on the minimum eigenvalue of  its Laplacian operator on  functions.

%
% the existence of $\alpha-$harmonic mappings.  It is demonstrated that, under certain assumptions,  every smooth map between Riemannian manifolds can be deformed into an $\alpha-$harmonic map. 
%The $\alpha-$stress energy tensor and its practical applications are discussed in Section 4.   It is also shown that the vanishing of the divergence of the $\alpha-$stress energy tensor of $\psi$ is identical to the vanishing of the $\alpha-$hormonicity of $\psi$. The concept of horizontally conformal maps and their physical applications are revisited in Section 5.  The criteria that cause the fibres of horizontally conformal $\alpha-$ harmonic mappings to be minimal submanifolds are then examined. 
%In Section 6, the Jacobi operator and Green's theorem are used to derive the second variation formula of the $\alpha-$energy functional. 
%The stability of $\alpha-$harmonic maps is explored in the last part.  Any $\alpha-$harmonic map from a Riemannian manifold to a Riemannian manifold with non-positive curvature is proved to be stable in this sense. It is also explored the instability of $\alpha-$harmonic mappings from a compact manifold to a unit standard sphere.

\section{\large{ $\alpha$- harmonic maps}} 
\noindent

In this part, we  first investigate the concept of $\alpha-$ harmonic maps.  The specific relationship between $\alpha$- harmonic and harmonic maps via conformal deformation is then provided. Finally, non-constant $\alpha-$harmonic mappings  between Riemannian manifolds are examined.\\

Consider the smooth map $\psi: (M^{m}, g)\longrightarrow (N^{n}, h)$ between Riemannian manifolds. Throughout this study,  it is assumed that $ (M,g) $ is an $m-$dimensional compact,  orientable and without boundary Riemannian manifold.  Additionally, the Levi-Civita connections on $ M$ and $ N $ are represented by $ \nabla^{M} $ and  $ \nabla ^N $,  respectively.  In addition, the induced connection on the pullback bundle $ \psi^{-1}TN $ is denoted by $\nabla $ and defined as $ \nabla _{Y}V= \nabla ^{N} _{d\psi (Y)}V,$  for any smooth vector field $Y\in \chi(M)$ and section $ V \in \Gamma(\psi^{-1}TN )$.  Let $\alpha$ be a positive constant with a value greater than one. The $\alpha-$energy functional of $\psi $ is  defined by
\begin{equation}\label{34rtz}
E_{\alpha}(\psi):=\int _{M}(1+\mid d\psi \mid ^{2})^{\alpha}dV _{g}.
\end{equation}
The critical points of $E_{\alpha}$ are called $\alpha-$harmonic maps. The related Euler-Lagrange equation of the $\alpha-$energy functional,   $E_{\alpha}$, is given by Green's theorem as follows
 \begin{align}\label{h1}
\tau _{\alpha}(\psi)&:=2\alpha(1+\mid d\psi\mid^{2})^{\alpha-1}\tau(\psi)\nonumber \\&+d\psi(grad(2\alpha(1+\mid d\psi\mid^{2})^{\alpha-1})\nonumber \\&=0.
\end{align}
The section $\tau _{\alpha}(\psi)\in \Gamma(\psi ^{-1}TN)$ is said to be the $\alpha -$ tension field of $\psi$. \\
By \eqref{123er} and \eqref{h1}, we get the following theorem. 
\begin{theorem}
Let $\psi: (M, g)\longrightarrow (N, h)$ be a smooth map between 
Riemannian manifolds. Then, $\psi$ is $\alpha-$harmonic if and only if it has a vanishing $\alpha-$tension field.

\end{theorem}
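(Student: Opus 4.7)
The plan is to derive the formula \eqref{h1} for $\tau_\alpha(\psi)$ as the Euler--Lagrange equation of $E_\alpha$, after which the theorem follows immediately. Concretely, I would compute the first variation of $E_\alpha$ along an arbitrary smooth one-parameter variation $\psi_t$ of $\psi$ with variation vector field $V:=\frac{\partial \psi_t}{\partial t}\big|_{t=0}\in\Gamma(\psi^{-1}TN)$, and show that
\[
\frac{d}{dt}E_\alpha(\psi_t)\Big|_{t=0}=-\int_M\langle\tau_\alpha(\psi),V\rangle\,dV_g.
\]
Since $V$ is arbitrary and $M$ is compact without boundary, the fundamental lemma of the calculus of variations then identifies the critical points of $E_\alpha$ with the zero set of $\tau_\alpha(\psi)$, giving the claim.

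The first step is to differentiate the integrand. Using the compatibility of $\nabla$ with $h$ and the symmetry of the second fundamental form of a variation ($\nabla_{e_i}V=\nabla_{\partial_t}d\psi_t(e_i)\big|_{t=0}$), one obtains, with respect to a local orthonormal frame $\{e_i\}$ on $M$,
\[
\frac{d}{dt}|d\psi_t|^2\Big|_{t=0}=2\sum_i\langle\nabla_{e_i}V,d\psi(e_i)\rangle.
\]
Setting $f:=2\alpha(1+|d\psi|^2)^{\alpha-1}$, this gives
\[
\frac{d}{dt}E_\alpha(\psi_t)\Big|_{t=0}=\int_M f\sum_i\langle\nabla_{e_i}V,d\psi(e_i)\rangle\,dV_g.
\]

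The second step is the key integration by parts. Define a vector field $X$ on $M$ by $\langle X,Y\rangle_g=f\langle V,d\psi(Y)\rangle_h$ for every $Y\in\chi(M)$. A direct computation of $\mathrm{div}\,X$ in the orthonormal frame yields
\[
\mathrm{div}\,X=\langle\mathrm{grad}\,f,(d\psi)^*V\rangle+f\langle V,\tau(\psi)\rangle+f\sum_i\langle\nabla_{e_i}V,d\psi(e_i)\rangle,
\]
where I used $\tau(\psi)=\sum_i(\nabla_{e_i}d\psi)(e_i)$ from \eqref{123er}. Since $\partial M=\emptyset$, the divergence theorem gives $\int_M\mathrm{div}\,X\,dV_g=0$, and rearranging produces
\[
\int_M f\sum_i\langle\nabla_{e_i}V,d\psi(e_i)\rangle\,dV_g=-\int_M\bigl\langle f\,\tau(\psi)+d\psi(\mathrm{grad}\,f),\,V\bigr\rangle\,dV_g.
\]
Recognising the bracket on the right as $\tau_\alpha(\psi)$ in the sense of \eqref{h1} completes the derivation, and the equivalence in the theorem follows.

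The main obstacle, and the step requiring care, is the divergence calculation: one must track both contributions on equal footing, namely the usual harmonic-map term $f\,\tau(\psi)$ and the extra term $d\psi(\mathrm{grad}\,f)$ that arises because the conformal factor $f$ depends on $|d\psi|^2$ and is therefore not constant. Once this is handled cleanly in a normal frame (taking $\nabla_{e_i}e_j=0$ at the point of evaluation), the remainder of the argument is a direct application of the fundamental lemma.
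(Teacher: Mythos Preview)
Your proposal is correct and takes essentially the same approach as the paper: the paper simply asserts that \eqref{h1} is the Euler--Lagrange equation of $E_\alpha$ ``by Green's theorem'' and then states the theorem as an immediate consequence of \eqref{123er} and \eqref{h1}, whereas you carry out that first-variation and divergence computation in full detail. Your argument is a faithful expansion of what the paper leaves implicit.
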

%Now, an example of $\alpha-$harmonic map is given. 
%\begin{example}
%Let $M=\{x\in \mathbb{R}^{m}-\{0\}, \mid x\mid ^{2}>\sqrt{m+1}\}$ and 
%\begin{equation}
%\alpha(x)=m+\dfrac{k}{2e^{\mid x\mid ^{2}}+ln \mid x\mid ^{2}-ln m}
%\end{equation}
%is the smooth positive function on $M$. By \eqref{h1}, it can be seen that 
%\begin{equation}
% \psi:M\longrightarrow \mathbb{S}^{m-1}, \qquad x\longmapsto \dfrac{x}{\mid x\mid ^{2}},
% \end{equation} 
% is an $\alpha(.)$ Sacks-Uhlenbeck harmonic map. 
%\end{example}
\begin{definition}
Let 
$\psi:(M,g)\longrightarrow (N,h)$ be a smooth map between Riemannian manifolds.  It is called that 
$\varphi$ is non-degenerate if the induced tangent map 
$\psi_{\ast}=d\psi$
is non-degenerate,  i.e.  
$ker d\psi=\emptyset. $
\end{definition}
Now an explicit relation between 
$\alpha-$ harmonic maps and harmonic maps through conformal deformation is given. \\

\begin{proposition}\label{j23}
Let 
$\psi:(M^{m},g)\longrightarrow (N^{n},h)$ 
be a non-degenerate smooth map with
$m>2.$
Then,  $\psi$ is  $\alpha-$ harmonic  if and only if the map 
$\psi$ is harmonic with respect to the conformally related metric 
$\bar{g}$
defined by 
\begin{equation}
\bar{g}=\{(2\alpha)^{\dfrac{2}{m-2}}(1+\mid d\psi\mid^{2})^{\dfrac{2\alpha-2}{m-2}}\}g.
\end{equation}
\end{proposition}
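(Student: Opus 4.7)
The plan is to reduce the statement to the classical formula describing how the tension field of a map transforms under a conformal change of the domain metric. Specifically, for any positive smooth function $e^{2\sigma}$ on $M$, if we set $\bar{g}=e^{2\sigma}g$, then the standard relation
\begin{equation*}
\tau_{\bar g}(\psi)=e^{-2\sigma}\bigl\{\tau_g(\psi)+(m-2)\,d\psi(\mathrm{grad}_g\,\sigma)\bigr\}
\end{equation*}
holds, as one verifies by computing $\bar g$-Christoffel symbols from those of $g$ and tracing $\nabla d\psi$ with $\bar g^{-1}$. This is the only nontrivial input; I would either cite it or reproduce the short derivation.

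Next I would rewrite the $\alpha$-tension field from \eqref{h1} in the factored form
\begin{equation*}
\tau_\alpha(\psi)=f\bigl\{\tau(\psi)+d\psi(\mathrm{grad}\,\log f)\bigr\},\qquad f:=2\alpha(1+|d\psi|^2)^{\alpha-1},
\end{equation*}
using $d\psi(\mathrm{grad}\,f)=f\,d\psi(\mathrm{grad}\,\log f)$. Comparing with the conformal-change identity, I would choose $\sigma$ so that $(m-2)\sigma=\log f$, which forces
\begin{equation*}
e^{2\sigma}=f^{2/(m-2)}=(2\alpha)^{2/(m-2)}(1+|d\psi|^2)^{(2\alpha-2)/(m-2)},
\end{equation*}
exactly the conformal factor in the statement. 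With this choice, substitution yields
\begin{equation*}
\tau_{\bar g}(\psi)=e^{-2\sigma}f^{-1}\tau_\alpha(\psi),
\end{equation*}
and since both $e^{-2\sigma}$ and $f^{-1}$ are strictly positive smooth functions, $\tau_{\bar g}(\psi)$ and $\tau_\alpha(\psi)$ vanish at the same points. The equivalence claimed in the proposition then follows immediately from the characterization of $\alpha$-harmonicity and harmonicity as the vanishing of the respective tension fields.

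The dimension hypothesis $m>2$ is used precisely to solve $(m-2)\sigma=\log f$ for $\sigma$; in dimension two the coefficient $(m-2)$ in the conformal-change formula degenerates and no such $\sigma$ exists, which is the familiar conformal invariance of two-dimensional harmonicity. The non-degeneracy hypothesis $\ker d\psi=\emptyset$ ensures $|d\psi|^2$ behaves well enough that $f$, $\log f$ and the conformal factor are genuinely smooth and bounded away from zero, so that $\bar g$ is a bona fide Riemannian metric and $\mathrm{grad}\,\log f$ is well-defined.

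The main potential obstacle is bookkeeping rather than ideas: one must be careful to identify the constant $2\alpha$ and the exponent $\alpha-1$ correctly when taking $\log f$ and its gradient, and to verify the signs and factors of $m-2$ in the conformal transformation formula. Once the factorization of $\tau_\alpha(\psi)$ is in place, the matching of the conformal factor to the stated expression is purely algebraic, and the biconditional is a one-line consequence of the positivity of $e^{-2\sigma}f^{-1}$.
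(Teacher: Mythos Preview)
Your argument is correct and is essentially the paper's proof in different notation: the paper writes the conformal factor as $\mu^2$ and states the transformation law as $\bar\tau(\psi)=\mu^{-m}\{\mu^{m-2}\tau(\psi)+d\psi(\mathrm{grad}\,\mu^{m-2})\}$, then sets $\mu^{m-2}=2\alpha(1+|d\psi|^2)^{\alpha-1}$ to obtain $\mu^{m}\bar\tau(\psi)=\tau_\alpha(\psi)$, which is exactly your identity $\tau_{\bar g}(\psi)=e^{-2\sigma}f^{-1}\tau_\alpha(\psi)$ after substituting $\mu=e^{\sigma}$ and $f=\mu^{m-2}$.

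One small correction to your commentary: your justification of the non-degeneracy hypothesis is off, since $f=2\alpha(1+|d\psi|^2)^{\alpha-1}\ge 2\alpha>0$ and is smooth regardless of whether $d\psi$ has kernel; the conformal factor is therefore a genuine Riemannian metric without that assumption. The paper's proof does not invoke non-degeneracy either, so the hypothesis appears to be superfluous for this particular argument.
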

\begin{proof}
Assume that
$\bar{g}$
be a Riemannian metric for some smooth positive function $\mu$ on $M$ conformally associated to 
$g$
by 
$\bar{g}=\mu^{2} g.$
Denote the tension fields of the smooth map $\psi$ with respect to 
$g$
and  
$\bar{g}$
by 
$\tau(\psi)$
and 
$\bar{\tau}(\psi)$, 
respectively.  By 
\eqref{123er},  it can be seen  that 
\begin{equation} \label{345rt}
\bar{\tau}(\psi)=\dfrac{1}{\mu ^{m}}\{\mu ^{m-2}\tau(\psi)+ d\psi (grad \mu ^{m-2})\}. 
\end{equation}
Setting 
\begin{equation}\label{r5}
\mu =(2\alpha)^{\dfrac{1}{m-2}}(1+\mid d\psi\mid^{2})^{\dfrac{\alpha-1}{m-2}}.
\end{equation}

By \eqref{h1}, \eqref{345rt} and \eqref{r5}, we get 
\begin{align}\label{345zh}
&\mu ^{m}\bar{\tau}(\psi)\nonumber \\&=\mu ^{m-2}\tau(\psi)+ d\psi (grad \mu ^{m-2})\nonumber \\ &=2\alpha(1+\mid d\psi\mid^{2})^{\alpha-1}
+ d\psi (grad(2\alpha(1+\mid d\psi\mid^{2})^{\alpha-1}) )
\nonumber \\&=\tau_{\alpha}(\psi).
\end{align}
The proposition \ref{j23} follows from 
\eqref{345zh}. 
\end{proof}
 Let $(M,g)$
and 
$(N,h)$
be compact Riemannian manifolds and $\mathcal{H}$
denote the  homotopy class of a smooth given map 
$\psi:(M^{m},g)\longrightarrow (N^{n},h)$. 
By using the methods of \cite{H},   the following theorem for the existence of $\alpha-$harmonic maps in 
$\mathcal{H}$
 can be obtained.
\begin{theorem}
Let
$\psi \in \mathcal{H}$ and $\psi:(M^{m},g)\longrightarrow (N^{n},h)$
be a harmonic map. Then, there is a smooth metric $\bar{g}$ on $M$ conformally equivalent to $g$ such that $\psi:(M,\bar{g})\longrightarrow (N,h)$ is $\alpha-$harmonic if $m>2\alpha$.
\end{theorem}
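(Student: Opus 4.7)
The plan is to apply Proposition \ref{j23} in reverse. Given a harmonic map $\psi:(M,g)\to(N,h)$, I look for $\bar{g}=\rho g$ conformal to $g$, with $\rho>0$ smooth on $M$, such that $\psi:(M,\bar{g})\to(N,h)$ is $\alpha$-harmonic. Proposition \ref{j23}, applied with source metric $\bar{g}$ in place of $g$, says this is equivalent to $\psi$ being harmonic with respect to the further rescaled metric $\tilde{g}=(2\alpha)^{2/(m-2)}(1+|d\psi|^{2}_{\bar{g}})^{(2\alpha-2)/(m-2)}\bar{g}$. Since $\psi$ is already harmonic with respect to $g$ by hypothesis, it is enough to choose $\rho$ so that $\tilde{g}=g$. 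Using $|d\psi|^{2}_{\bar{g}}=\rho^{-1}|d\psi|^{2}_{g}$, this condition becomes, with $u=1/\rho$ and $f=|d\psi|^{2}_{g}$, the pointwise scalar equation $u^{(m-2)/2}=2\alpha(1+uf)^{\alpha-1}$.

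The substantive work is to produce a smooth positive solution $u$ of this equation on $M$. Setting $G(u)=u^{(m-2)/2}(1+uf)^{-(\alpha-1)}$, a direct computation gives $(\log G)'(u)=\bigl[(m-2)+uf(m-2\alpha)\bigr]\big/\bigl[2u(1+uf)\bigr]$, which is strictly positive on $(0,\infty)$ exactly because of the hypothesis $m>2\alpha$. Under the same hypothesis $(m-2)/2>\alpha-1$, whence $G(u)\to\infty$ as $u\to\infty$, while $G(0^{+})=0$. Monotonicity together with the intermediate value theorem therefore yields, for each $f\geq 0$, a unique positive solution $u=u(f)$ of $G(u)=2\alpha$.

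Smooth dependence of this solution on $f$ follows from the implicit function theorem applied to $\Phi(u,f)=u^{(m-2)/2}-2\alpha(1+uf)^{\alpha-1}$, since $\partial_{u}\Phi>0$ at the solution. Composing with $f=|d\psi|^{2}_{g}\in C^{\infty}(M)$ produces a smooth positive $u$ on $M$, hence a smooth conformal factor $\rho=1/u$ and the desired metric $\bar{g}=\rho g$. By construction $\tilde{g}=g$, so Proposition \ref{j23} immediately gives that $\psi:(M,\bar{g})\to(N,h)$ is $\alpha$-harmonic. The main obstacle is precisely the analysis of the implicit pointwise equation: the hypothesis $m>2\alpha$ is used in an essential way both to force the leading power $u^{(m-2)/2}$ to outweigh $(uf)^{\alpha-1}$ at infinity and to make $G$ globally monotone in $u$, so that existence, uniqueness, and smooth dependence on the parameter $f$ all hold simultaneously.
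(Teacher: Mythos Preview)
Your argument is correct. The paper itself gives no proof of this theorem, referring instead to ``the methods of \cite{H}'' (Hong's conformal-equivalence technique for exponentially harmonic maps); your proof is exactly the natural adaptation of that method to the $\alpha$-energy, inverting Proposition~\ref{j23} by solving a pointwise scalar equation for the conformal factor, with the hypothesis $m>2\alpha$ used precisely to guarantee monotonicity of $G$ and the correct growth at infinity so that the implicit function theorem applies.

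One small technical point: Proposition~\ref{j23} as stated carries a non-degeneracy hypothesis on $\psi$ that neither the theorem nor your argument assumes. This is harmless, since the identity $\mu^{m}\bar{\tau}(\psi)=\tau_{\alpha}(\psi)$ underlying the proposition (see \eqref{345rt}--\eqref{345zh}) is a pointwise consequence of the conformal-change formula for the tension field and requires no such hypothesis; you may invoke that identity directly rather than the proposition.
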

Now,  the concepts of conformal and killing vector fields are considered.  Then,  their physical applications are mentioned.  Ultimately,  by these concepts,  a non-existence theorem for $\alpha-$harmonic maps is given.  
\begin{definition}
A smooth vector field $X$ on a Riemannian manifold $(M,g)$ is called a conformal vector field  with  the potential function
$\lambda$   If  $\mathcal{L}_{X}g=2\lambda g$,  where $\mathcal{L}_{X}g$ is the Lie derivative of $g $ with respect to $X$.   A conformal vector field $X$  with the potential $\lambda$ is called killing If  $\lambda=0$ or,  equivalently,  the flow of $X$ consists of isometries of   $(M,g)$.  
\end{definition}
  Noting that any conformal vector field $X$ with potential function 
$\lambda $ satisfies 
\begin{equation}\label{20r}
g(\nabla _{Y}^{M}X,Z)+g(Y,\nabla _{Z}^{M}X)=\lambda g(Y,Z),
\end{equation}
for any $Y,Z\in \chi(M)$.\\
\begin{remark}
Conformal vector fields are key tools in physics for constructing various solutions that are then employed in the analysis of physical parameters in modified gravity theories (MGTs). By using correct conformal vector fields of $pp-$wave space-times in the $f(R)$ theory of gravity,  it is possible to discover that a very specific class of $pp-$waves known as plane fronted gravitational waves $(GWs)$ has a solution in the $f(R)$ theory of gravity,  \cite{gh1}.  Killing vector fields are also important in many domains of mathematics, including gravitation, quantum and teleparallel theory, and so on.
For example, in the realm of teleparallel theory, Sharif and Amir \cite{5tr} developed the teleparallel version of the Lie derivative for Killing vector fields and utilized those equations to find the teleparallel Killing vector fields in the Einstein manifold.  With this notion of teleparallel Lie derivative, an approach to exploring symmetries in teleparallel theory has been launched.
\end{remark}

Based on the above notations,  the non-existence of non-constant $\alpha-$harmonic maps is investigated in the following theorem.
\begin{theorem}\label{er3}
Let $\psi:(M,g)\rightarrow (N,h)$ be an $\alpha-$harmonic map between Riemannian manifolds.  Suppose that the Riemannian manifold $(N,h)$ admitting a conformal vector field $X$ with the potential function $\lambda>0$.  Then, 
$\psi$ is constant.  
\end{theorem}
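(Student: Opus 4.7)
The plan is to produce a divergence identity on $M$ whose integral, by Stokes' theorem on the closed manifold $M$, forces $|d\psi|$ to vanish. Specifically, I would build a one-form (equivalently, a vector field) on $M$ out of the conformal vector field $X$ on $N$ and the map $\psi$, whose divergence captures both the $\alpha$-tension field of $\psi$ and the conformal data $2\lambda h$. The natural choice, motivated by the structure of $\tau_\alpha(\psi)$ in equation \eqref{h1}, is the vector field $V$ defined by
\begin{equation*}
g(V,Y)\;=\;2\alpha\bigl(1+|d\psi|^{2}\bigr)^{\alpha-1}\,h\bigl(d\psi(Y),X\circ\psi\bigr),\qquad Y\in\chi(M).
\end{equation*}

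The main calculation is to compute $\mathrm{div}(V)$ at a point using a geodesic frame $\{e_i\}$ on $M$. Writing $\phi:=2\alpha(1+|d\psi|^{2})^{\alpha-1}$, the product rule gives $\mathrm{div}(V)=\phi\,\mathrm{div}(V_0)+g(\mathrm{grad}\,\phi,V_0)$, where $V_0$ corresponds to the dual of $\psi^{\ast}(X^{\flat})$. Differentiating $g(V_0,e_i)=h(d\psi(e_i),X\circ\psi)$ and using the definition of the tension field, one gets
\begin{equation*}
\mathrm{div}(V_0)\;=\;h(\tau(\psi),X\circ\psi)+\sum_i h\bigl(d\psi(e_i),\nabla^{N}_{d\psi(e_i)}X\bigr).
\end{equation*}
The crucial step is to substitute the conformal identity \eqref{20r} with $Y=Z=d\psi(e_i)$, which symmetrises the last sum into $\lambda\,|d\psi|^{2}$. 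Combining with $g(\mathrm{grad}\,\phi,V_0)=h(d\psi(\mathrm{grad}\,\phi),X\circ\psi)$ and comparing with \eqref{h1} collapses everything to
\begin{equation*}
\mathrm{div}(V)\;=\;h\bigl(\tau_{\alpha}(\psi),X\circ\psi\bigr)+\phi\,\lambda\,|d\psi|^{2}.
\end{equation*}

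Since $\psi$ is $\alpha$-harmonic, $\tau_{\alpha}(\psi)=0$ by the theorem following \eqref{h1}, so $\mathrm{div}(V)=2\alpha\,\lambda\,(1+|d\psi|^{2})^{\alpha-1}|d\psi|^{2}$. Integrating over the compact boundaryless $M$ and applying the divergence theorem yields
\begin{equation*}
0\;=\;\int_{M}2\alpha\,\lambda\,(1+|d\psi|^{2})^{\alpha-1}|d\psi|^{2}\,dV_{g}.
\end{equation*}
Because $\alpha>1$, $\lambda>0$ and the factor $(1+|d\psi|^{2})^{\alpha-1}$ is strictly positive, the integrand is non-negative and can vanish only if $|d\psi|\equiv 0$; hence $\psi$ is constant.

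The only delicate point I anticipate is the conformal-identity step: one must be careful that the version of \eqref{20r} used matches the convention $\mathcal{L}_X h=2\lambda h$ (a factor of two issue), so that pairing with $d\psi(e_i)$ on both slots produces exactly $\lambda|d\psi|^{2}$ rather than $2\lambda|d\psi|^{2}$. Beyond that, the argument is a direct Bochner-type computation and does not require any curvature hypothesis on $N$.
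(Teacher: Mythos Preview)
Your proposal is correct and follows essentially the same route as the paper: build a one-form on $M$ from $X\circ\psi$ weighted by a power of $(1+|d\psi|^{2})$, compute its divergence so that the $\alpha$-harmonicity kills one term and the conformal identity \eqref{20r} turns the other into a nonnegative multiple of $|d\psi|^{2}$, then integrate. Your choice of weight $\phi=2\alpha(1+|d\psi|^{2})^{\alpha-1}$ is in fact the cleaner one, since it matches the structure of $\tau_{\alpha}(\psi)$ in \eqref{h1} exactly; the paper writes the weight as $(1+|d\psi|^{2})^{\alpha}$, which appears to be a typo in the exponent, and your caution about the factor of two in \eqref{20r} is also well placed.
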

\begin{proof}
Let $\{e_{i}\}$ be a normal orthonormal frame
at $z\in M$.  Setting 
\begin{equation}\label{21r}
\omega (Y):=h(X\circ \psi,  (1+\mid d\psi \mid ^{2})^{\alpha}d\psi(Y)). 
\end{equation}
By  \eqref{20r} and \eqref{21r},
 the divergence of $\omega$ can be obtained as follows 
\begin{align}\label{22r}
div(\omega)&=e_{i}(h(X\circ \psi,  (1+\mid d\psi \mid ^{2})^{\alpha}d\psi(e_{i})))\nonumber \\&=(1+\mid d\psi \mid ^{2})^{\alpha}h(\nabla _{e_{i}}X\circ \psi,d\psi(e_{i}) )
\nonumber \\&=\lambda\circ \psi (1+\mid d\psi \mid ^{2})^{\alpha}h(d\psi(e_{i}),  d\psi(e_{i}))
\nonumber \\&=\lambda\circ \psi \mid d\psi \mid ^{2}(1+\mid d\psi \mid ^{2})^{\alpha}, 
\end{align}
where we use the $\alpha-$harmonicity of $\psi$ for the second equality.  From \eqref{22r} and the divergence theorem,  we get 
\begin{align}\label{23r}
0&=\int _{M}div X \,  dV_{g}\nonumber \\&=\int _{M}\lambda\circ \psi \mid d\psi \mid ^{2}(1+\mid d\psi \mid ^{2})^{\alpha} dV_{g}.
\end{align}
This implies that $\mid d\psi \mid=0$.  Then,  $\psi$ is constant and hence completes the proof. 
\end{proof}

\section{Stability of $\alpha-$harmonic maps}
In this section,  the instability of non-constant $\alpha$-harmonic mappings with regard to the criterion of the Ricci curvature of the target manifold is investigated.  Then,  the instability of totally geodesic immersion mappings from a hypersurface $M^{n-1}$ to a Riemannian manifold $N^{n}$ with positive Ricci curvature is explored. \\

  By  Jacobi operator and Green's theorem,   the second variation formula of the $\alpha-$energy functional can be obtained as follows.
\begin{theorem}(The second variation formula)\label{12sd}
Let 
$ \psi :(M,g)\longrightarrow (N,h) $ be an $\alpha-$ harmonic map and 
$\{\psi_{t,s}:M\longrightarrow N \}_{-\epsilon <s, t<\epsilon}$
be a 2-parameter smooth variation of $ \psi $
such that
$ \psi_{0,0}=\psi. $
 Then
 \begin{align}\label{30i}
&\dfrac{\partial ^{2}}{\partial t \partial s}E_{\alpha }(\psi)\mid _{t=s=0}=- \int _{M}h(J_{\alpha}(\upsilon), \omega),
 \end{align}
 where 
$$\upsilon= \dfrac{\partial \psi_{t,s}}{\partial t}\mid _{s=t=0}, \qquad \omega= \dfrac{\partial \psi_{t,s}}{\partial s}\mid _{s=t=0}, $$
 and  the Jacobi  operator
 $J_{\alpha}(\upsilon)\in \Gamma (\psi ^{-1}TN)$ is given by 
 \begin{align}\label{36}
&J_{\alpha}(\upsilon)\nonumber \\&=2\alpha(1+\mid d\psi \mid ^{2})^{\alpha-1}Tr_{g} R^{N}(\upsilon, d\psi)d\psi 
\nonumber \\&+4\alpha(\alpha -1) Tr_{g} \nabla \langle \nabla \upsilon, d\psi\rangle (1+\mid d\psi \mid ^{2})^{\alpha-2}d\psi
\nonumber \\&+2\alpha Tr_{g}\nabla (1+\mid d\psi \mid ^{2})^{\alpha-1}\nabla \upsilon,
 \end{align}
 here $R^{N}$ is the curvature tensor on $(N,h)$ and $\langle ,\rangle$ denotes the inner product on $T^{\ast}M\otimes \psi ^{-1}TN. $
\end{theorem}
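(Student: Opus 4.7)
The plan is to view the 2-parameter variation as a smooth map $\Psi:M\times(-\epsilon,\epsilon)^{2}\to N$, work on the pullback bundle $\Psi^{-1}TN$ equipped with its induced connection $\widetilde{\nabla}$, and extend the vector fields $V=d\Psi(\partial/\partial t)$, $W=d\Psi(\partial/\partial s)$ to sections of this bundle so that $\upsilon=V|_{t=s=0}$ and $\omega=W|_{t=s=0}$. First I would differentiate $|d\Psi|^{2}=\sum_{i}h(d\Psi(e_{i}),d\Psi(e_{i}))$ in $t$, using $[e_{i},\partial/\partial t]=0$ and the torsion-free property to swap $\widetilde{\nabla}_{\partial/\partial t}d\Psi(e_{i})=\widetilde{\nabla}_{e_{i}}V$, obtaining $\partial_{t}|d\Psi|^{2}=2\langle\nabla V,d\Psi\rangle$. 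Plugging this into $E_{\alpha}$ yields
\begin{equation*}
\frac{\partial}{\partial t}E_{\alpha}(\Psi)=\int_{M}2\alpha(1+|d\Psi|^{2})^{\alpha-1}\langle\nabla V,d\Psi\rangle\,dV_{g}.
\end{equation*}

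Next I would differentiate this expression in $s$. The $s$-derivative of the scalar factor $(1+|d\Psi|^{2})^{\alpha-1}$ produces the coupling term $4\alpha(\alpha-1)(1+|d\Psi|^{2})^{\alpha-2}\langle\nabla W,d\Psi\rangle\langle\nabla V,d\Psi\rangle$. The $s$-derivative of $\langle\nabla V,d\Psi\rangle$ splits, using the same commutation trick, into $\sum_{i}h(\widetilde{\nabla}_{\partial/\partial s}\widetilde{\nabla}_{e_{i}}V,d\Psi(e_{i}))+\sum_{i}h(\nabla_{e_{i}}V,\nabla_{e_{i}}W)$. For the first piece I would apply the pullback curvature identity
\begin{equation*}
\widetilde{\nabla}_{\partial/\partial s}\widetilde{\nabla}_{e_{i}}V=\widetilde{\nabla}_{e_{i}}\widetilde{\nabla}_{\partial/\partial s}V+R^{N}(W,d\Psi(e_{i}))V,
\end{equation*}
together with the symmetry $\widetilde{\nabla}_{\partial/\partial s}V=\widetilde{\nabla}_{\partial/\partial t}W$ (from $[\partial/\partial t,\partial/\partial s]=0$ and vanishing torsion). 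Evaluating at $t=s=0$ and writing $Z=\widetilde{\nabla}_{\partial/\partial t}W|_{0,0}\in\Gamma(\psi^{-1}TN)$, I collect four contributions: the coupling term, a curvature term $\sum_{i}h(R^{N}(\omega,d\psi(e_{i}))\upsilon,d\psi(e_{i}))$, the symmetric pairing $\sum_{i}h(\nabla_{e_{i}}\upsilon,\nabla_{e_{i}}\omega)$, and a remainder term involving $\nabla Z$.

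The remainder term involving $Z$ vanishes because $\psi$ is $\alpha$-harmonic: defining the $1$-form $\eta(X)=2\alpha(1+|d\psi|^{2})^{\alpha-1}h(Z,d\psi(X))$ and applying the divergence theorem, the bulk contribution reduces to $-\int_{M}h(Z,\tau_{\alpha}(\psi))\,dV_{g}=0$ by \eqref{h1}. This is the key place the $\alpha$-harmonicity of $\psi$ enters, and it is what removes the dependence on the acceleration field $Z$ of the variation.

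For the remaining three terms I would integrate by parts to move all covariant derivatives off $\omega$. The curvature contribution stays algebraic and produces the first term of \eqref{36}. The pairing $2\alpha(1+|d\psi|^{2})^{\alpha-1}\langle\nabla\upsilon,\nabla\omega\rangle$, after integration by parts against the $1$-form $X\mapsto 2\alpha(1+|d\psi|^{2})^{\alpha-1}h(\nabla_{X}\upsilon,\omega)$, yields the third term $2\alpha\,\mathrm{Tr}_{g}\nabla[(1+|d\psi|^{2})^{\alpha-1}\nabla\upsilon]$. The coupling term, integrated by parts against $X\mapsto 4\alpha(\alpha-1)(1+|d\psi|^{2})^{\alpha-2}\langle\nabla\upsilon,d\psi\rangle h(\omega,d\psi(X))$, yields the second term $4\alpha(\alpha-1)\mathrm{Tr}_{g}\nabla[\langle\nabla\upsilon,d\psi\rangle(1+|d\psi|^{2})^{\alpha-2}d\psi]$; the leftover divergence contains $\tau(\psi)$ which, absorbed together with the gradient of the conformal factor, again gives zero by $\alpha$-harmonicity. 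The main obstacle I anticipate is the bookkeeping in this last step: keeping the three integration-by-parts identities aligned so that the residual $\tau_{\alpha}(\psi)$ terms cancel cleanly and the surviving integrand assembles into precisely $-h(J_{\alpha}(\upsilon),\omega)$ with the coefficient structure of \eqref{36}.
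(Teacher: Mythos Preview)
Your outline is correct and is exactly the standard computation; the paper itself does not actually supply a proof of this theorem beyond the one-line remark ``By Jacobi operator and Green's theorem, the second variation formula of the $\alpha$-energy functional can be obtained as follows.''  The integration-by-parts identities you sketch in your last paragraph are precisely (in reverse) the ones the paper writes out later, in the proof of Corollary~\ref{45}, as equations \eqref{37} and \eqref{38}, to pass from the Jacobi-operator form \eqref{36} to the quadratic form \eqref{40}.

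One small correction to your bookkeeping: the integration by parts for the coupling term is clean and does not leave a residual $\tau(\psi)$ that needs to be cancelled by $\alpha$-harmonicity.  If you set $f=4\alpha(\alpha-1)(1+|d\psi|^{2})^{\alpha-2}\langle\nabla\upsilon,d\psi\rangle$ and $\eta(X)=f\,h(\omega,d\psi(X))$, then at a normal point
\[
\operatorname{div}\eta
=f\,\langle\nabla\omega,d\psi\rangle
+h\!\left(\omega,\;\mathrm{Tr}_{g}\nabla\bigl[f\,d\psi\bigr]\right),
\]
so the divergence theorem gives directly $\int_{M}f\,\langle\nabla\omega,d\psi\rangle\,dV_{g}=-\int_{M}h(\omega,\mathrm{Tr}_{g}\nabla[f\,d\psi])\,dV_{g}$, which is already the second summand of $J_{\alpha}(\upsilon)$ in \eqref{36}.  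The only place $\alpha$-harmonicity is genuinely needed is, as you correctly identified, to kill the acceleration term involving $Z=\widetilde{\nabla}_{\partial/\partial t}W|_{0,0}$.
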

\begin{remark}
The Jacobi operator is used to calculate the nonlinear boundary value of Troesch's equation. This equation appears in the study of plasma column confinement by radiation pressure as well as in the theory of gas porous electrodes, \cite{m1}.
\end{remark}
\begin{definition}\label{35}
Under the assumptions of theorem \ref{12sd}, 
setting
\begin{equation}
I_{\alpha,  \psi}(\upsilon,\omega)=\dfrac{\partial ^{2}}{\partial t \partial s}E_{\alpha }(\psi_{t,s})\mid _{s=t=0}.
\end{equation}
An  $\alpha$-harmonic map $\psi$ is called $\alpha$-stable or stable if  $I_{\alpha,  \psi}(\omega,\omega)$ is non-negative for every vector field  $\omega$ along $\psi$,  or,  equivalently,  the eigenvalues of the elliptic self-adjoint operator
$J_{\alpha}$
are all non-negative.  Otherwise,  it is said to be  $\alpha-$unstable or unstable.  

%Then,  
%$\psi$ is said to be stable $\alpha-$harmonic if $I(\upsilon,\upsilon)\geq 0$
%for any compactly supported vector field $\upsilon$ along $\psi$.   Otherwise,  it is called an $\alpha-$unstable map.  
\end{definition}
It is valuable to note that the stability of $\psi$ is equivalent to the fact that the eigenvalues of
the elliptic self-adjoint operator,$J_{\alpha}(\psi)$, are all non-negative.  Moreover,  any  $\alpha-$stable map $\psi: M\longrightarrow N$ is a local (resp.  global) minimum of the $\alpha$-energy  functional,  $E_{\alpha}$,     within a homotopy class of $L_{1}^{p}(M,N)$ having the same trace on $\partial M$.
\begin{remark}
The stability of harmonic maps plays a key role in mathematical physics and mechanics,  \cite{con11}.
For instance,  the linearized Vlasov-Maxwell equations are used to investigate harmonic stability properties for planar wiggler free-electron laser\textit{(FEL)}.  It should be noted that the analysis is performed in the Compton regime for a tenuous electron beam travelling in the $z$ direction via a constant amplitude planar wiggler magnetic field $B^{0}=-B_{\omega}cos k_{0}z\hat{e}_{x}$, \cite{D1}.
\end{remark}
From Theorem \ref{12sd}, we obtain the following corollary. 
\begin{corollary}\label{45}
Assume that $N$ is a Riemannian manifold with non-positive Riemannian curvature.  Any $\alpha-$harmonic map $ \psi :(M,g)\longrightarrow (N,h) $ is stable.
\end{corollary}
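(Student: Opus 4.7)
The plan is to take an arbitrary vector field $\omega \in \Gamma(\psi^{-1}TN)$ along $\psi$, set $v=\omega$ in the second variation formula, and show directly that
$I_{\alpha,\psi}(\omega,\omega) = -\int_M h(J_\alpha(\omega),\omega)\,dV_g \geq 0$.
Once this is established, every eigenvalue of the self-adjoint operator $J_\alpha$ is non-negative, which is exactly the stability criterion in Definition \ref{35}.

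The main step is to rewrite the right-hand side, using the explicit form of $J_\alpha$ given in \eqref{36}, as a sum of manifestly non-negative integrands. I would treat the three terms in \eqref{36} separately. The third term $2\alpha\,\mathrm{Tr}_g \nabla (1+|d\psi|^2)^{\alpha-1}\nabla\omega$ has divergence form, so pairing with $\omega$ and integrating over the closed manifold $M$ (using the divergence theorem applied to the 1-form $X \mapsto 2\alpha(1+|d\psi|^2)^{\alpha-1} h(\nabla_X\omega,\omega)$) converts this contribution into
\begin{equation*}
\int_M 2\alpha(1+|d\psi|^2)^{\alpha-1}\,|\nabla\omega|^2\,dV_g,
\end{equation*}
which is non-negative. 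By the same integration-by-parts trick applied to the divergence structure of the middle term, its contribution becomes
\begin{equation*}
\int_M 4\alpha(\alpha-1)(1+|d\psi|^2)^{\alpha-2}\,\langle\nabla\omega,d\psi\rangle^2\,dV_g,
\end{equation*}
which is non-negative because $\alpha>1$ by hypothesis.

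For the curvature term, I would use the standard symmetries of $R^N$ together with the assumption of non-positive sectional (equivalently, Riemannian) curvature of $N$. Choosing a local orthonormal frame $\{e_i\}$ on $M$, the integrand becomes
\begin{equation*}
-2\alpha(1+|d\psi|^2)^{\alpha-1}\sum_i h\bigl(R^N(\omega,d\psi(e_i))d\psi(e_i),\omega\bigr),
\end{equation*}
and each summand $h(R^N(\omega,d\psi(e_i))d\psi(e_i),\omega)$ is non-positive when the sectional curvature of $N$ is non-positive; the prefactor $2\alpha(1+|d\psi|^2)^{\alpha-1}$ is strictly positive, so the whole integrand is non-negative.

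Combining the three non-negative pieces yields $I_{\alpha,\psi}(\omega,\omega)\geq 0$ for every variational vector field $\omega$, which by Definition \ref{35} means $\psi$ is $\alpha$-stable. I expect the only delicate step to be the careful bookkeeping in the integration by parts for the middle term, since the factor $\langle\nabla\omega,d\psi\rangle$ is already a divergence-type expression nested inside another trace; but once one recognizes the outer divergence and applies Stokes' theorem on the closed manifold $M$, all boundary contributions vanish and the non-negativity is immediate from the sign conditions $\alpha>1$ and $R^N \leq 0$.
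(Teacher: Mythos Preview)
Your proposal is correct and follows essentially the same route as the paper: the paper defines the $1$-forms $\delta(X)=2\alpha(1+|d\psi|^2)^{\alpha-1}h(\nabla_X\upsilon,\omega)$ and $\eta(X)=4\alpha(\alpha-1)(1+|d\psi|^2)^{\alpha-2}\langle\nabla\upsilon,d\psi\rangle h(d\psi(X),\omega)$, applies the divergence theorem to each, and arrives at exactly the three non-negative integrands you describe (their formula \eqref{40}). Your identification of the two integration-by-parts steps and the sign analysis of the curvature term matches the paper's argument line for line.
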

\begin{proof}
Setting 
\begin{equation}
\delta(X)=2\alpha (1+\mid d\psi \mid ^{2})^{\alpha-1}h(\nabla _{X}\upsilon, \omega), 
\end{equation} 
and 
\begin{align}\label{ui}
 \qquad \eta (X)&=4\alpha(\alpha-1) (1+\mid d\psi \mid ^{2})^{\alpha-2}\nonumber \\& \qquad\langle \nabla\upsilon , d\psi \rangle h(d\psi(X),\omega),
\end{align}
for any $X\in \chi(M)$. Then, we have
\begin{align}\label{37}
&-h(2\alpha Tr_{g}\nabla (1+\mid d\psi \mid ^{2})^{\alpha-1}\nabla \upsilon,\omega)
\nonumber \\&=-\sum _{i=1}^{m}h(2\alpha\nabla_{e_{i}}(1+\mid d\psi \mid ^{2})^{\alpha-1}\nabla_{e_{i}}\upsilon,\omega)
\nonumber \\&=-\sum _{i=1}^{m}\{e_{i}(h(2\alpha(1+\mid d\psi \mid ^{2})^{\alpha-1}\nabla_{e_{i}}\upsilon,\omega))
\nonumber \\&+h(2\alpha(1+\mid d\psi \mid ^{2})^{\alpha-1}\nabla_{e_{i}}\upsilon,\nabla_{e_{i}}\omega)\}
\nonumber \\&=-div\delta+2\alpha(1+\mid d\psi \mid ^{2})^{\alpha-1}\langle \nabla \upsilon, \nabla \omega \rangle,
\end{align}
On the other hand, by \eqref{ui}, we get 
\begin{align}\label{38}
&-h( Tr_{g} \nabla \langle \nabla \upsilon, d\psi\rangle B d\psi, \omega)
\nonumber \\&=\sum _{i=1}^{m}\{-h( \nabla_{e_{i}} \langle \nabla \upsilon, d\psi\rangle  A d\psi(e_{i}), \omega)\}
\nonumber \\&=
\sum _{i=1}^{m}\{-e_{i}(h( \langle \nabla \upsilon, d\psi\rangle A d\psi(e_{i}), \omega))
\nonumber \\&+
h( \langle \nabla \upsilon, d\psi\rangle A d\psi(e_{i}), \nabla_{e_{i}}\omega)\}
\nonumber \\&=-div \,\eta+A\langle \nabla \upsilon, d\psi\rangle \langle \nabla \omega, d\psi\rangle, 
\end{align}
where 
$A:=4\alpha(\alpha -1) (1+\mid d\psi \mid ^{2})^{\alpha-2}. 
$
By substituting \eqref{37} and \eqref{38} in \eqref{36} and using Divergence theorem and Definition \ref{35}, we get 
\begin{align}\label{40}
&I{\alpha,  \psi}(\upsilon,\upsilon)
\nonumber \\&=\int_{M}\{4\alpha(\alpha -1)(1+\mid d\psi \mid ^{2})^{\alpha-2}\langle \nabla \upsilon, d\psi\rangle ^{2}
\nonumber \\&
-2\alpha(1+\mid d\psi \mid ^{2})^{\alpha-1}h(Tr_{g} R^{N}(\upsilon, d\psi)d\psi, \upsilon )
\nonumber \\&
+2\alpha(1+\mid d\psi \mid ^{2})^{\alpha-1}\mid \nabla \upsilon \mid ^{2}
\}dV_{g}.
\end{align}
By equations \eqref{40} and the assumptions, the corollary \ref{45} follows. 
\end{proof}

Now,  by the Nash embedding theorem,  consider $N^{n}$ as a submanifold of codimension 
$p$ in an Euclidean space 
$\mathbb{R}^{n+p}$,  where the codimension $p$ is arbitrary.  Denote the inner product on $\mathbb{R}^{n+p}$ by $\langle,  \rangle$,  and the Levi-Civita connections on $N^{n}$  and $\mathbb{R}^{n+p}$ by 
$\nabla ^{N}$ and $\nabla ^{\mathbb{R}}$,  respectively.  At $x\in N$,  any vector 
$W$ on $\mathbb{R}^{n+p}$
can be decomposed as follows 
\begin{equation}\label{1b}
W=W^{\top}+W^{\bot},
\end{equation}
where 
$W^{\top}$ is the tangent part to 
$N$
and 
$W^{\bot}$ is the  normal part to 
$N$. 
The second fundamental form of 
$N^{n}$ 
in 
 $\mathbb{R}^{n+1}$
 is denoted by 
 $B$
and defined as follows 
\begin{equation}\label{2b}
B(X,Y)=(\nabla ^{\mathbb{R}}_{X}Y)^{\bot},
\end{equation}
where $X, Y$ are tangent vectors of 
$N^{n}$.
 Furthermore,  the shape operator 
$A^{V}$ corresponding to a normal vector field 
$V$
on $N^{n}$ is defined by 
\begin{equation}\label{3b}
A^{V}(X)=-(\nabla ^{\mathbb{R}}_{X}V)^{\top},
\end{equation}
where $X$ is a tangent vector field on 
$N^{n}$. 
Noting that,  the second fundamental form and the shape operator of $N^{n}$ are satisfied by the following equation
\begin{align}\label{4b}
\langle B(X,Y), V\rangle &=\langle A^{V}(X),    Y\rangle,
\end{align}
where  $X$ and $Y$  are tangent to  $N^{n}$ and  $V$ is normal to 
$N^{n}$  at $x$.  
 Define the function
$f: N\longrightarrow R$
by 
\begin{equation}\label{5b}
f(x)=max \{\mid B(\upsilon,\upsilon)\mid ^{2}:\upsilon\in T_{x} N, \, \mid \upsilon\mid=1 \},
\end{equation}
at each 
$x\in N$, and define the function 
$\theta :T_{x}N\longrightarrow \mathbb{R}$ by 
\begin{equation}\label{6b}
\theta(\upsilon)=\sum_{a=1}^{n}\mid B(\upsilon,\upsilon_{a})\mid ^{2},
\end{equation}
where $\{\upsilon_{a}\}_{a=1}^{n}$
is an orthonormal basis for 
$T_{x}N.$ It can be shown that the value of $\theta$ is independent of the choice of orthonormal basis.  Denote the the Ricci curvature of $N$ at $x$ in the direction $\upsilon$ by $Ric(\upsilon,\upsilon).$\\

Based on the preceding notations,  the instability of non-constant $\alpha$-harmonic maps with regard to the Ricci curvature of the target manifold is investigated in the following. 

\begin{theorem}
Let $\psi:(M^{m},g)\longrightarrow (N^{n},h)$
be a non-constant $\alpha-$harmonic map between Riemannian manifolds.  Suppose that 
\begin{equation}
2(\alpha-1)h+\theta(\upsilon)<Ric(\upsilon,\upsilon),
\end{equation}
at each $x\in N$ and any unit vector $\upsilon\in T_{x}N$.  Then,  $\psi$ is unstable.
\end{theorem}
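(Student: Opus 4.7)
The strategy is the classical Nash-embedding argument adapted to $E_{\alpha}$: I would embed the target isometrically into Euclidean space, build test variations from parallel ambient vector fields, sum the resulting second variations over an orthonormal ambient basis to produce intrinsic trace quantities (the Ricci curvature of $N$, the invariant $\theta$, and a norm of the second fundamental form), and then use the pointwise hypothesis to force the total sum to be strictly negative, so at least one ambient direction must destabilise $\psi$.

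First I would invoke the Nash embedding $N^{n}\hookrightarrow\mathbb{R}^{n+p}$ already set up before the theorem. For each $V\in\mathbb{R}^{n+p}$ I set $\upsilon_V:=(V\circ\psi)^{\top}\in\Gamma(\psi^{-1}TN)$ via the decomposition \eqref{1b}. Since $V$ is parallel in the ambient Euclidean space, differentiating $V=V^{\top}+V^{\bot}$ along $d\psi(e_i)$ and splitting the Gauss--Weingarten formulas \eqref{2b}--\eqref{3b} into tangent and normal parts yields
\begin{equation}
\nabla_{e_i}\upsilon_V = A^{V^{\bot}}\bigl(d\psi(e_i)\bigr),
\end{equation}
while \eqref{4b} gives $\langle\nabla\upsilon_V,d\psi\rangle=\bigl\langle\sum_i B(d\psi(e_i),d\psi(e_i)),V\bigr\rangle$. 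Thus every ingredient appearing in the second-variation formula \eqref{40} is expressed through $B$ and inner products with $V$.

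Next I would substitute $\upsilon=\upsilon_V$ into \eqref{40} and sum over an orthonormal basis $\{V_a\}_{a=1}^{n+p}$ of $\mathbb{R}^{n+p}$. Using the standard ambient identity $\sum_a\langle X,V_a\rangle\langle Y,V_a\rangle=\langle X,Y\rangle$ (equivalently $\sum_a V_a^{\top}\otimes V_a^{\top}=\mathrm{id}_{TN}$), three traces collapse to intrinsic invariants on the image of $\psi$: the curvature term becomes $\sum_i\mathrm{Ric}(d\psi(e_i),d\psi(e_i))$; the gradient-squared term becomes $\sum_i\theta(d\psi(e_i))$, since $\sum_a|A^{V_a^{\bot}}(\xi)|^{2}=\sum_{\alpha}|A^{\nu_{\alpha}}(\xi)|^{2}=\theta(\xi)$ for any orthonormal normal frame $\{\nu_\alpha\}$; and the cross term aggregates to $\bigl|\sum_i B(d\psi(e_i),d\psi(e_i))\bigr|^{2}$, which I would then bound pointwise by the quantity controlled by the hypothesis after normalising the $d\psi(e_i)$ to unit length and invoking the symmetry of $B$.

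Finally I would apply the pointwise hypothesis with $\upsilon=d\psi(e_i)/|d\psi(e_i)|$, multiply by $|d\psi(e_i)|^{2}$ and sum over $i$; since $(1+|d\psi|^{2})^{\alpha-1}>0$ and $\psi$ is non-constant (so $|d\psi|^{2}\not\equiv 0$), integration yields
\begin{equation}
\sum_{a=1}^{n+p} I_{\alpha,\psi}(\upsilon_{V_a},\upsilon_{V_a})<0,
\end{equation}
which forces at least one $V_a$ to satisfy $I_{\alpha,\psi}(\upsilon_{V_a},\upsilon_{V_a})<0$, proving that $\psi$ is $\alpha$-unstable. I expect the main obstacle to be the $(\alpha-1)$ cross term: its ambient trace produces $\bigl|\mathrm{Tr}_{g}B(d\psi,d\psi)\bigr|^{2}$, which is not literally the quantity appearing in the pointwise hypothesis, so a careful Cauchy--Schwarz estimate together with a rescaling argument to unit tangent directions is required to absorb the factor $(1+|d\psi|^{2})^{-1}$ and preserve the strict inequality throughout the integration over $M$.
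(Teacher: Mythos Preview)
Your proposal is correct and follows essentially the same route as the paper: Nash embedding, tangential projections $\upsilon_V=(V\circ\psi)^{\top}$ of parallel ambient vectors, the Gauss--Weingarten identity $\nabla_{e_i}\upsilon_V=A^{V^{\bot}}(d\psi(e_i))$, the trace over an ambient orthonormal basis producing $\mathrm{Ric}$, $\theta$ and $\bigl|\sum_i B(d\psi(e_i),d\psi(e_i))\bigr|^{2}$, and finally a Cauchy--Schwarz bound on the latter via the function $f$ of \eqref{5b} (the ``$h$'' in the hypothesis is a misprint for this $f$) are exactly the steps the paper performs in \eqref{7b}--\eqref{13b}. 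Your closing remark that the cross term must be matched to the hypothesis through the inequality $|d\psi|^{2}/(1+|d\psi|^{2})<1$ after normalising to unit directions is precisely the content of the paper's estimate \eqref{345}.
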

\begin{proof}
Let $\{e_{i}\}_{i=1}^{m}$ be a local orthonormal frame field on $M$ and $\omega$ be a parallel vector field in $\mathbf{R}^{n+p}$.  Then,  by \eqref{40},  we have 
\begin{align}\label{7b}
&I{\alpha,  \psi}(\omega ^{\top},\omega ^{\top})
\nonumber \\&=\int_{M}\sum _{i=1}^{m}\{A_{1}\langle \nabla^{N} _{d\psi(e_{i})}\omega ^{\top}, d\psi(e_{i})\rangle ^{2}
\nonumber \\&
+A_{2}h( R^{N}(\omega ^{\top},d\psi(e_{i}) )\omega ^{\top}, d\psi(e_{i}) )
\nonumber \\&
+\mid \nabla _{d\psi(e_{i})}\omega ^{\top}\mid ^{2}
\}dV_{g}.
\end{align}
where $A_{1}:=4\alpha(\alpha -1)(1+\mid d\psi \mid ^{2})^{\alpha-2}$ and $A_{2}:=2\alpha(1+\mid d\psi \mid ^{2})^{\alpha-1}$.
By \eqref{1b} and \eqref{3b},  we get 
\begin{align}\label{8b}
\nabla _{d\psi(e_{i})} ^{N}\omega ^{\top}&=(\nabla _{d\psi(e_{i})}^{R} \omega ^{\top})^{\top}\nonumber \\&=(\nabla _{d\psi(e_{i})}^{R}( \omega- \omega ^{\bot}))^{\top}=-(\nabla _{d\psi(e_{i})}^{R} \omega ^{\bot})^{\top}
\nonumber \\&=
A^{ \omega ^{\bot}}(d\psi(e_{i})).
\end{align}
Then,  by \eqref{4b} and  \eqref{8b},  we get 
 \begin{align}\label{9b}
& \sum _{i=1}^{m}\langle \nabla^{N} _{d\psi(e_{i})}\omega ^{\top}, d\psi(e_{i})\rangle\nonumber \\&=
\sum _{i=1}^{m}\langle A^{ \omega ^{\bot}}(d\psi(e_{i})), d\psi(e_{i})\rangle \nonumber \\&=\sum _{i=1}^{m}\langle B(d\psi(e_{i}),  d\psi(e_{i})),  \omega ^{\bot}\rangle 
 \end{align}
By   \eqref{7b},  \eqref{8b} and  \eqref{9b},  it can be seen that 
\begin{align}\label{10b}
&I{\alpha,  \psi}(\omega ^{\top},\omega ^{\top})
\nonumber \\&=\int_{M}\sum _{i=1}^{m}\{A_{1}\sum _{i=1}^{m}\langle B(d\psi(e_{i}),  d\psi(e_{i})),  \omega ^{\bot}\rangle  ^{2}
\nonumber \\&
+A_{2}h( R^{N}(\omega ^{\top},d\psi(e_{i}) )\omega ^{\top}, d\psi(e_{i}) )
\nonumber \\&
+\mid A^{ \omega ^{\bot}}(d\psi(e_{i}))\mid ^{2}
\}dV_{g}.
\end{align}
Now,  by \eqref{10b},    we want to calculate the trace of $I{\alpha,  \psi}$.  Due to the fact that this trace is independent of the choice of an orthonormal basis for $ \mathbb{R} ^{n+p}$,  at any point $x\in N$.  We choose an orthonormal basis $\{\vartheta _{a},\vartheta _{\alpha}\}$,  $a=1,\cdots,  n$,  $\alpha=n+1,\cdots,  n+p$ such that $\{\vartheta _{a}\}_{a=1}^{n}$ are tangent to $N$ and $\{\vartheta _{\alpha}\}_{\alpha=n+1}^{n+p}$ are normal to $N$. Then,  we have 
\begin{align}\label{11b}
&\sum _{\alpha=n+1}^{n+p}\mid A^{ \vartheta _{\alpha} }(d\psi(e_{i}))\mid ^{2}\nonumber \\
&=\sum _{\alpha=n+1}^{n+p}\sum _{a=1}^{n}
\langle
 A^{ \vartheta _{\alpha} }(d\psi(e_{i})),  \vartheta _{a} \rangle  ^{2} 
\nonumber \\
&=
\sum _{a=1}^{n}\sum _{\alpha=n+1}^{n+p}
\langle
 B(d\psi(e_{i}),  \vartheta _{a} ),   \vartheta _{\alpha} \rangle  ^{2} 
 \nonumber \\
&=
\sum _{a=1}^{n}
\mid 
 B(d\psi(e_{i}),  \vartheta _{a} )\mid   ^{2}. 
\end{align}
Furthermore,  by Cauchy-Schwartz inequality,   we get 
\begin{align}\label{12b}
&\mid \sum_{i}
 B(d\psi(e_{i}),  d\psi(e_{i}) )\mid   ^{2}  \nonumber \\
 &= \sum_{i,j}\langle B(d\psi(e_{i}),  d\psi(e_{i}) ),  B(d\psi(e_{j}),  d\psi(e_{j}) ) \rangle
 \nonumber \\&\leq  \sum_{i,j}\mid B(d\psi(e_{i}),  d\psi(e_{i}) )\mid \mid B(d\psi(e_{j}),  d\psi(e_{j}) )\mid 
%  \nonumber \\&=
%  \sum_{i,j} \mid d\psi(e_{i})\mid^{2}\mid d\psi(e_{j})\mid^{2}\mid   \nonumber \\&B(\dfrac{d\psi(e_{i})}{\mid d\psi(e_{i})\mid },  \dfrac{d\psi(e_{i})}{\mid d\psi(e_{i})\mid } )\mid \mid B(\dfrac{d\psi(e_{j})}{\mid d\psi(e_{j})\mid },  \dfrac{d\psi(e_{i})}{\mid d\psi(e_{i})\mid } )\mid
  \nonumber \\&\leq  \sum_{i,j}f(x)\mid d\psi(e_{i})\mid^{2}\mid d\psi(e_{j})\mid^{2}
   \nonumber \\&=\mid d\psi\mid^{2}\sum_{i}f(x)\mid d\psi(e_{i})\mid^{2}
\end{align}
Setting $\nu_{i}=d\psi(e_{i})/\mid d\psi(e_{i})\mid$,  for each $i$ such that $d\psi(e_{i})\neq 0$ at $x$.  Then,  by \eqref{10b}, \eqref{11b} and \eqref{12b},  we get 
\begin{align}\label{13b}
&Tr \, I{\alpha,  \psi}
\nonumber \\&=\int_{M}\sum _{i=1}^{m}\{A_{1}\mid \sum_{i}
 B(d\psi(e_{i}),  d\psi(e_{i}) )\mid   ^{2} 
\nonumber \\&
+A_{2}\sum_{a,i}\{ 
\mid  B(d\psi(e_{i}),  d\psi(e_{i}) )\mid   ^{2} 
\nonumber \\&+\langle R^{N}(\vartheta _{a},d\psi(e_{i}) )\vartheta _{a}, d\psi(e_{i}) \rangle
  \}
dV_{g}
\nonumber \\&\leq \int_{M}\sum _{i=1}^{m}\mid d\psi(e_{i})\mid^{2}\{
A_{1}\mid d\psi \mid ^{2}f(x)
\nonumber \\&+A_{2}(\theta (\nu_{i})-Ric(\nu_{i},\nu_{i}))
\}\nonumber \\&=2\alpha\int_{M}(1+\mid d\psi \mid^{2})^{\alpha-2}\sum _{i=1}^{m}\mid d\psi(e_{i})\mid^{2}\nonumber \\&\{
2(\alpha-1)\mid d\psi\mid^{2}f(x)+(1+\mid d\psi \mid^{2})(\theta(\nu_{i})\nonumber\\&-Ric(\nu_{i},\nu_{i})\}dV_{g}.
\end{align}
On the other hand,  by the assumptions, we get

\begin{align}\label{345}
&-2f(x)(\alpha-1)>2(\alpha-1)\mid d\psi\mid^{2}f(x)\nonumber\\&+(1+\mid d\psi \mid^{2})(\theta(\nu_{i})-Ric(\nu_{i},\nu_{i})
 \end{align} 
By \eqref{13b} \eqref{345},  it can be conclude that 
\begin{equation}
Tr \, I{\alpha,  \psi}<0,
\end{equation}
Thus,  $\psi$ is unstable.
%اینجا 
%2(\alpha-1)\mid d\psi\mid^{2}f(x)+(1+\mid d\psi \mid^{2})(\theta(\nu_{i})-Ric(\nu_{i},\nu_{i})<-2f(x)(\alpha-1)
%به جای 
%2(\alpha-1)\mid d\psi\mid^{2}f(x)+(1+\mid d\psi \mid^{2})(\theta(\nu_{i})-Ric(\nu_{i},\nu_{i})<-2f(x)(\alpha-1)<0
%استفاده شده است
\end{proof}
Now,  the instability of totally geodesic immersion maps from a hypersurface $M^{n-1}$ to a Riemannian manifold $N^{n}$ with positive Ricci curvature is studied. 
\begin{theorem}
Let $\psi:(M^{n-1},g)\longrightarrow (N^{n},h)$ be a totally geodesic isometric immersion of a hypersurface $M^{n-1}$ to $N^{n}$.  Then $\psi$ is unstable if the Ricci curvature of $N^{n}$ is positive. 
\end{theorem}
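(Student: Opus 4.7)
The plan is to test the second variation formula from Theorem \ref{12sd} (equation (40)) against a single, carefully chosen vector field along $\psi$, namely the unit normal to $M^{n-1}$ in $N^{n}$, and show that this one test already forces a negative second variation.

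First I would check that the hypotheses of the second variation formula apply. Since $\psi$ is totally geodesic, its tension field $\tau(\psi)=Tr_{g}B_{\psi}$ vanishes, so $\psi$ is harmonic and, a fortiori, $\alpha$-harmonic. Because $\psi$ is also an isometric immersion of a hypersurface, $\mid d\psi\mid^{2}=n-1$ is a positive constant on $M$, so the weights $(1+\mid d\psi\mid^{2})^{\alpha-k}$ appearing in (40) are constants that can be pulled outside the integral.

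Next let $\xi$ be a unit normal to $\psi(M)$ inside $N$, regarded as a section of $\psi^{-1}TN$. The key observation is that being totally geodesic means the shape operator $A^{\xi}\equiv 0$, so by (3b) the tangential part of $\nabla^{N}_{d\psi(X)}\xi$ vanishes for every $X\in\chi(M)$. Differentiating $h(\xi,\xi)=1$ shows that $h(\nabla^{N}_{d\psi(X)}\xi,\xi)=0$, so the normal part vanishes as well; hence $\nabla\xi=0$ as a section of $\psi^{-1}TN$. Consequently, in (40) with $\upsilon=\xi$, both terms containing $\nabla\upsilon$ drop out and only the curvature term survives, giving
\[
I_{\alpha,\psi}(\xi,\xi)=-2\alpha\,n^{\alpha-1}\int_{M}h\bigl(Tr_{g}R^{N}(\xi,d\psi)d\psi,\xi\bigr)\,dV_{g}.
\]

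Finally I would evaluate the integrand pointwise. Completing $\{d\psi(e_{i})\}_{i=1}^{n-1}$ by $\xi$ into an orthonormal basis of $T_{\psi(x)}N$ (which is legitimate since $\psi$ is an isometric immersion of a hypersurface), the sum $\sum_{i=1}^{n-1}h(R^{N}(\xi,d\psi(e_{i}))d\psi(e_{i}),\xi)$ is exactly $Ric(\xi,\xi)$ because the missing diagonal term $h(R^{N}(\xi,\xi)\xi,\xi)$ vanishes by antisymmetry. Positivity of $Ric^{N}$ then gives $I_{\alpha,\psi}(\xi,\xi)<0$, so $\psi$ is unstable by Definition \ref{35}. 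The only slightly delicate point, which I expect to be the main obstacle, is the global existence of a smooth unit normal $\xi$: if $M$ is not two-sided in $N$ one either assumes orientability (standard in this context) or passes to the orientation double cover of the normal bundle, where the same computation goes through.
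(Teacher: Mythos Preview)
Your proposal is correct and follows essentially the same route as the paper: choose the unit normal $\xi$ as the test section, use the totally geodesic hypothesis together with $h(\xi,\xi)=1$ to show $\nabla\xi=0$, plug into (40) so that only the curvature term survives, and recognize that term as $Ric(\xi,\xi)$. Your write-up is in fact slightly more careful than the paper's, since you explicitly verify that $\psi$ is $\alpha$-harmonic (needed before invoking (40)) and you flag the two-sidedness issue for the global existence of $\xi$, neither of which the paper addresses.
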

\begin{proof}
Choosing an orthonormal frame of $\{e_{i}\}_{i=1}^{n-1}$ on   $M^{n-1}$and  setting $\bar{e}_{i}:=d\psi(e_{i})$ for $i=1,\cdots ,  n-1$.  It can be seen that,   $\{\bar{e} _{i}\}_{i=1}^{n-1}$ forms an orthonormal basis on $M$.  Let $V$ be a unit normal vector field of $M^{n-1}$ in $N^{n}$.  Then,  $\{\bar{e} _{1}, \cdots \bar{e} _{n-1},  V \}$ is an orthonormal basis on $N$.  First,  we show that $\nabla^{N} _{\bar{e} _{i}}V=0$ for $i=1,\cdots,n-1$.  From $h(V,V)=0$,  it can be shown that
\begin{equation}\label{ef2}
h(\nabla^{N} _{\bar{e} _{i}}V,V)=0.  
\end{equation}
 Furthermore,  since  $V$ is normal to $M^{n-1}$,  and considering  \eqref{4b},    we get 
 \begin{align}\label{ef1}
  h(\nabla^{N} _{\bar{e} _{i}}V,\bar{e} _{j})&=  h((\nabla^{N} _{\bar{e} _{i}}V)^{\top},\bar{e} _{j})=-h(A^{V}(\bar{e} _{i}),\bar{e} _{j})\nonumber \\&=-h(B(\bar{e} _{i},\bar{e} _{j}),  V),
 \end{align}
where $(\nabla^{N} _{\bar{e} _{i}}V)^{\top}$ is the tangent part of $\nabla^{N} _{\bar{e} _{i}}V$ to $M^{n-1}$,  $A^{V}$ is the shape operator corresponding to $V$ and 
$B$ is the second fundamental form of $M^{n-1}$ in $N^{n}$.  Due to the fact that 
$M^{n-1}$ is totally geodesic,  then the second fundamental form $B$ vanishes.  Thus,  by \eqref{ef2} and \eqref{ef1},  it can be seen that 
\begin{equation}\label{ef3}
\nabla^{N} _{\bar{e} _{i}}V =0,   
\end{equation}
for   $i=1,\cdots,n-1. $     By \eqref{40} and \eqref{ef3},  we get 
\begin{equation}\label{ef4}
I_{\alpha,  \psi}(V,V)=-2\alpha n^{\alpha-1}\int _{M}Ric(V,V) dV_g.
\end{equation}
By  \eqref{ef4} and the assumptions,   it can be seen that $I_{\alpha,  \psi}(V,V)<0$ and hence completes the proof.
\end{proof}
\section{$\alpha$-stability of smooth manifolds}

 In this section,  the notion of $\alpha-$stable manifolds and its physical applications are studied.  Then,  
the instability of compact Riemannian manifolds  admitting a non-isometric conformal vector field is investigated.   Furthermore,   it is shown that on stable manifolds,  the assertion $I_{\alpha,id}(\upsilon,\upsilon)=0 $ is equivalent to the fact that the vector field 
$\upsilon$ is killing.  Finally,  the stability of Einstein Riemannian manifold $(M,g)$,  by considering an assumption on the smallest eigenvalue of the Laplacian operator on the functions of $M$,   is studied. 
\begin{definition}\label{ert}
Let $(M,g)$ be a compact Riemannian manifold and $\alpha$ be a positive constant with a value greater than one.   The manifold $M$ is said to be an $\alpha-$stable or stable if the identity map 
$id: (M,g)\longrightarrow (M,g)$ is  $\alpha-$stable.  Otherwise it is called unstable.
\end{definition}
\begin{remark}
According to physics, the concept of stable manifolds is a fundamental premise in
integration theory,  the stability of motion and  the inverse problem of Birkhoffian mechanics. 
 It is worth noting that Birkhoffian mechanics is an extension of Hamiltonian mechanics and is applicable to hadron physics and molecular physics, \cite{21e}. 
\end{remark}
%\begin{remark}
%Due to the fact that 
%$\mid id _{TM}\mid $ is constant for any Riemannian manifold $(M,g)$,  then,  by \eqref{h1},  it can be seen that $id $ is an 
%$\alpha-$harmonic map. 
%\end{remark}
\begin{theorem}\label{k3}
Let 
$(M^{m},g)$
be a compact Riemannian manifold and 
$\alpha$ be a positive constant with a value greater than one.  Then,  $M$ is an $\alpha$-stable manifold if $m^{2}-m<2\alpha$.  
\end{theorem}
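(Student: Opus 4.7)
The plan is to apply the second variation formula \eqref{40} directly to $\psi=id$, rewrite the resulting integral by the classical Yano integral identity, and then control the remaining indefinite term by a pointwise Cauchy--Schwarz estimate. Under the hypothesis $m^{2}-m<2\alpha$ the positivity of $I_{\alpha,id}(\upsilon,\upsilon)$ reduces to an elementary algebraic inequality in $\alpha$ and $m$.

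Concretely, I fix a local orthonormal frame $\{e_{i}\}_{i=1}^{m}$ on $M$. For $\psi=id$ we have $d\psi(e_{i})=e_{i}$, so $\mid d\psi\mid^{2}=m$, $\langle\nabla\upsilon,d\psi\rangle=\sum_{i}g(\nabla_{e_{i}}\upsilon,e_{i})=div(\upsilon)$, and the curvature symmetries give $h(Tr_{g}R^{M}(\upsilon,d\psi)d\psi,\upsilon)=\sum_{i}g(R^{M}(\upsilon,e_{i})e_{i},\upsilon)=Ric(\upsilon,\upsilon)$. Substituting into \eqref{40} collapses it to
\[ I_{\alpha,id}(\upsilon,\upsilon)=2\alpha(1+m)^{\alpha-2}\!\!\int_{M}\!\!\bigl\{2(\alpha-1)(div\,\upsilon)^{2}+(1+m)(\mid\nabla\upsilon\mid^{2}-Ric(\upsilon,\upsilon))\bigr\}dV_{g}. \]
I then invoke Yano's identity for a vector field on a compact orientable Riemannian manifold without boundary,
\[ \int_{M}\bigl(\mid\nabla\upsilon\mid^{2}-Ric(\upsilon,\upsilon)\bigr)dV_{g}=\int_{M}\Bigl(\tfrac{1}{2}\mid\mathcal{L}_{\upsilon}g\mid^{2}-(div\,\upsilon)^{2}\Bigr)dV_{g}, \]
which one derives by integrating the divergence of $(div\,\upsilon)\upsilon-\nabla_{\upsilon}\upsilon$, applying the Ricci commutator $\nabla_{j}\nabla_{i}\upsilon^{j}=\nabla_{i}(div\,\upsilon)+R_{ik}\upsilon^{k}$, and inserting the symmetric/antisymmetric split of $\nabla\upsilon$. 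After substitution the integrand of $I_{\alpha,id}$ becomes $(2\alpha-m-3)(div\,\upsilon)^{2}+\tfrac{1+m}{2}\mid\mathcal{L}_{\upsilon}g\mid^{2}$.

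The argument concludes with the pointwise Cauchy--Schwarz bound $(div\,\upsilon)^{2}=\tfrac{1}{4}(tr\,\mathcal{L}_{\upsilon}g)^{2}\leq\tfrac{m}{4}\mid\mathcal{L}_{\upsilon}g\mid^{2}$. If $2\alpha\geq m+3$, both summands in the integrand are individually non-negative. Otherwise $2\alpha<m+3$ combined with the hypothesis $m^{2}-m<2\alpha$ forces $(m-3)(m+1)<0$, so $m\in\{1,2\}$; in that regime the Cauchy--Schwarz bound majorises the integrand below by $\tfrac{1}{4}(2m\alpha-m^{2}-m+2)\mid\mathcal{L}_{\upsilon}g\mid^{2}$, whose coefficient evaluates to $\alpha/2$ when $m=1$ and to $\alpha-1$ when $m=2$, both strictly positive because $\alpha>1$ is standing throughout the paper. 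Either way $I_{\alpha,id}(\upsilon,\upsilon)\geq 0$ for every vector field $\upsilon$, proving that $M$ is $\alpha$-stable. I expect the main technical care to be in the Yano identity step: the Ricci commutator depends on the curvature convention, and the symmetric/antisymmetric decomposition of $\nabla\upsilon$ must be tracked precisely so that the coefficient $\tfrac{1}{2}$ on $\mid\mathcal{L}_{\upsilon}g\mid^{2}$ is recovered correctly; once that identity is in place, the remainder is a short algebraic verification.
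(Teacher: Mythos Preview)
Your proof is correct and follows essentially the same route as the paper: specialize the second variation formula \eqref{40} to $\psi=id$, apply Yano's identity to convert $|\nabla\upsilon|^{2}-Ric(\upsilon,\upsilon)$ into $\tfrac{1}{2}|\mathcal{L}_{\upsilon}g|^{2}-(div\,\upsilon)^{2}$, and then close with the Cauchy--Schwarz bound relating $(div\,\upsilon)^{2}$ and $|\mathcal{L}_{\upsilon}g|^{2}$. The only cosmetic difference is organizational: the paper substitutes $\tfrac{1}{2}|\mathcal{L}_{\upsilon}g|^{2}\geq\tfrac{2}{m}(div\,\upsilon)^{2}$ uniformly to obtain a single lower bound $H\int_{M}(div\,\upsilon)^{2}\,dV_{g}$, whereas you split into the regime $2\alpha\geq m+3$ (both summands non-negative) and the residual cases $m\in\{1,2\}$, using the same inequality in the opposite direction to bound below by a multiple of $|\mathcal{L}_{\upsilon}g|^{2}$.
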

\begin{proof}
Let 
$\{e_{i}\}$
be an orthonormal frame field on $M$.  By \eqref{40} and Definition  \ref{ert},  $I_{\alpha}(\upsilon,\upsilon)$ for the identity map 
$id:(M,g)\longrightarrow (M,g)$ can be rewritten as follows
\begin{align}\label{41}
&I_{\alpha,  id}(\upsilon,\upsilon)
\nonumber \\&=4\alpha(\alpha -1)(1+m)^{\alpha-2}\int_{M} \sum_{i=1}^{m}(div\, \upsilon)^{2}dV_{g}
\nonumber \\&+2\alpha(1+m)^{\alpha-1}\int_{M} g(J_{2,id}(\upsilon),\upsilon)dV_{g}
\end{align} 
where $J_{2,id}(\upsilon)= \Delta \upsilon-Ric(\upsilon).$ Here $\Delta$ is the rough Laplacian on $M$  and $Ric $ is the Ricci tensor of $N$. 
Now,  by definition of divergence operator and Cauchy-Schwarz inequality,  we get 
\begin{align}\label{etu}
(div\,  \upsilon)^{2}&=(\sum _{i=1}^{m}g(\nabla _{e_{i}}\upsilon,  e_{i}))^{2}
\nonumber \\&=
(\sum _{i=1}^{m}g(\nabla _{e_{i}}\upsilon,  e_{i})g(e_{i},e_{i}))^{2}
\nonumber \\&\leq \sum _{i=1}^{m}g(\nabla _{e_{i}}\upsilon,  e_{i})^{2}\sum _{i=1}^{m}g(e_{i},e_{i})^{2}
\nonumber \\&=m\sum _{i=1}^{m}g(\nabla _{e_{i}}\upsilon,  e_{i}).
\end{align}
Using     \eqref{etu},  it can be seen that  
\begin{equation}\label{v4}
\sum _{i=1}^{m}g(\nabla _{e_{i}}\upsilon,  e_{i})\geq \dfrac{1}{m} (div \,  \upsilon)^{2}.
\end{equation}
By \eqref{v4} and considering the assumptions,  we get 
\begin{align}\label{k1}
&\int_{M} g(J_{2,id}(\upsilon),\upsilon)dV_{g}
\nonumber \\&=\int_{M}\{\dfrac{1}{2} \mid \mathcal{L}  _{\upsilon} g\mid ^{2}-(div \, \upsilon)^{2}\}dV_{g}
\nonumber \\&=\dfrac{1}{2}\sum_{i,j}\int_{M} (\mathcal{L}_{\upsilon}g(e_{i},  e_{j}))^{2}dV_{g}-\int_{M} (div \,\upsilon)^{2}\}dV_{g}
\nonumber \\&=
\dfrac{1}{2}\sum_{i,j}\int_{M} (g(\nabla _{e_{i}}\upsilon,  e_{j})+g(e_{i}, \nabla _{e_{j}}\upsilon ))^{2}dV_{g}\nonumber \\&-\int_{M} (div \, \upsilon)^{2}\}dV_{g}
\nonumber \\&\geq 
\dfrac{1}{2}\sum_{i=1}^{m}\int_{M} 4 g(\nabla _{e_{i}}\upsilon,  e_{i})^{2}dV_{g}-\int_{M} (div \,\upsilon)^{2}dV_{g}
\nonumber \\&\geq (\dfrac{2}{m}-1)\int_{M} (div \, \upsilon)^{2} dV_{g}, 
\end{align}
where we use the following formula of 
K.  Yano,  see \cite{Yano1952}
\begin{align}\label{31i}
&\int_{M} g(J_{2,id}(\upsilon),\upsilon)dV_{g}\nonumber \\&=\int_{M}\{\dfrac{1}{2} \mid \mathcal{L}\upsilon  g\mid ^{2}-(div \,  \upsilon)^{2}\}dV_{g},
\end{align}
 for the second equality.  
 By \eqref{41},  \eqref{v4} and  \eqref{k1},  we get 
\begin{align}\label{k2}
 I_{\alpha , id}(\upsilon,\upsilon)\geq H\int_{M}(div \,\upsilon)^{2}dV_{g}\geq 0,
 \end{align} 
where $H=2\alpha(1+m)^{\alpha-1}(2\alpha+m-m^{2})/m$.   By \eqref{k2} and the assumptions,  Theorem \ref{k3} follows.
\end{proof}
In the following proposition,  the instability of compact Riemannian manifolds which admits a non-isometric conformal vector field is studied. 
\begin{proposition}
Let $(M^{m},g)$ be a compact Riemannian manifold and 
 $\alpha$ be a positive constant with a value greater than two.   Assume that there exists a non-isometric conformal vector field 
 $W$ on $M$.  Then,  $M$ is unstable if $2\alpha<m-1$. 
\end{proposition}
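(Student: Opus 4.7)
The plan is to test the second variation form $I_{\alpha,id}$ on the non-isometric conformal vector field $W$ itself and show directly that $I_{\alpha,id}(W,W)<0$ under the stated hypotheses. Since $W$ being non-isometric means its potential $\lambda$ is not identically zero, this will force $\int_M \lambda^2\,dV_g > 0$, which is the strict positivity we will eventually need.

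First, I would extract the two pointwise identities implied by $\mathcal{L}_W g = 2\lambda g$. Tracing this equation against a local orthonormal frame gives $div\,W = m\lambda$, while computing the Hilbert--Schmidt norm of the symmetric tensor $\mathcal{L}_W g$ gives $|\mathcal{L}_W g|^2 = 4m\lambda^2$. Substituting these into Yano's formula \eqref{31i} and simplifying yields
\begin{equation*}
\int_M g(J_{2,id}(W),W)\,dV_g = (2m - m^2)\int_M \lambda^2\,dV_g.
\end{equation*}

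Next, I would set $\upsilon = W$ in formula \eqref{41} from the proof of Theorem \ref{k3}. The first term contributes $4\alpha(\alpha-1)(1+m)^{\alpha-2} m^2 \int_M \lambda^2\,dV_g$, and the second term contributes $2\alpha(1+m)^{\alpha-1}(2m-m^2)\int_M \lambda^2\,dV_g$. Factoring out the positive constant $2\alpha m(1+m)^{\alpha-2}$ reduces the sign of $I_{\alpha,id}(W,W)$ to the sign of the bracket
\begin{equation*}
2(\alpha-1)m + (1+m)(2-m) = 2\alpha m - m^2 - m + 2.
\end{equation*}

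Finally, from $2\alpha < m-1$ I would estimate $2\alpha m < m(m-1) = m^2 - m < m^2 + m - 2$, so the bracket is strictly negative. Combined with $\int_M \lambda^2 \,dV_g > 0$, this gives $I_{\alpha,id}(W,W) < 0$, and by Definition \ref{ert} the identity map is $\alpha$-unstable, so $M$ is unstable. The hypothesis $\alpha > 2$ is used only to guarantee that the $(\alpha-2)$-exponent in \eqref{41} is handled legitimately; all real work is in the bracket estimate, which is elementary. The main (and only) obstacle is bookkeeping: tracking the factors carefully enough so that the algebraic inequality $2\alpha < m-1 \Rightarrow 2\alpha m < m^2 + m - 2$ emerges cleanly, rather than a weaker or spurious threshold.
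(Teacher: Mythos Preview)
Your proof is correct and follows essentially the same route as the paper: both plug the conformal field $W$ into formula \eqref{41}, use Yano's identity \eqref{31i} together with the conformal relation $\tfrac{1}{2}|\mathcal{L}_W g|^2=\tfrac{2}{m}(div\,W)^2$ (equivalently your $|\mathcal{L}_W g|^2=4m\lambda^2$, since $div\,W=m\lambda$), and reduce to the sign of the scalar $2\alpha m - m^2 - m + 2$. Your write-up in fact spells out the final inequality $2\alpha m < m^2-m < m^2+m-2$ more explicitly than the paper does; the remark about $\alpha>2$ and the exponent $(\alpha-2)$ is unnecessary, however, since $(1+m)^{\alpha-2}>0$ for every real $\alpha$.
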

\begin{proof}
Since $W$ is a non-isometric conformal vector field, we have 
\begin{equation}
div W\neq 0
\end{equation}
 and 
\begin{equation}\label{e8}
\dfrac{1}{2} \mid \mathcal{L}  _{W}g\mid ^{2}=\dfrac{2}{m}(div W)^{2}
\end{equation}
For more details see \cite{Yano1952}.  By \eqref{41},   \eqref{e8} and the first quality of \eqref{k1},  we get 
\begin{align}\label{e9}
&I_{\alpha, id }(W,W)\nonumber \\&=4\alpha (\alpha -1)(1+m)^{\alpha-2}\int _{M}(div W)^{2} dV_{g}
\nonumber \\&+2\alpha(1+m)^{\alpha -1}\int _{M} g(J_{2,id}(W),  W)dV_{g}
\nonumber \\&=4\alpha (\alpha -1)(1+m)^{\alpha-2}\int _{M}(div W)^{2} dV_{g}
\nonumber \\&+2\alpha(1+m)^{\alpha -1}\int _{M} \dfrac{1}{2} \mid \mathcal{L}  _{V}g\mid ^{2}- (div W)^{2}dV_{g}\nonumber \\&= C\int (2\alpha m+2-m-m^{2}) (div W)^{2} dV_{g}
\end{align}
where $C=2\alpha(1+m)^{\alpha-2}/m$.  
By \eqref{e9} and  the assumptions,  it can be seen that 
\begin{equation}
I_{\alpha, id }(W,W)<0.
\end{equation}
This completes the proof. 
\end{proof}
In the following,  under some assumptions,  it is shown that on $\alpha$- manifolds,  the assertion $I_{\alpha,id}(\upsilon,\upsilon)=0 $ is equivalent to the fact that the vector field 
$\upsilon$ is killing.
\begin{proposition}\label{rtgh}
Let 
$(M^{m},g)$
be a Riemannian manifold and $\alpha$ is a positive constant such that $2\alpha\geq m+3$. Then $M$ is stable.   For $\alpha= (m+3)/2 $,   a vector field $V$ is killing  if and only if $I_{\alpha,id}(V,V)=0 $. 
\end{proposition}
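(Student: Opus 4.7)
The plan is to derive a closed-form expression for $I_{\alpha,id}(\upsilon,\upsilon)$ as a non-negative combination of $\int_M(\mathrm{div}\,\upsilon)^2\,dV_g$ and $\int_M|\mathcal{L}_\upsilon g|^2\,dV_g$, with coefficients depending explicitly on $\alpha$ and $m$, and then to read off both conclusions from the sign of these coefficients.

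First, I start from the expression for $I_{\alpha,id}(\upsilon,\upsilon)$ given in \eqref{41}, which splits the index form into a divergence-squared term with coefficient $4\alpha(\alpha-1)(1+m)^{\alpha-2}$ and a term $2\alpha(1+m)^{\alpha-1}\int_M g(J_{2,id}(\upsilon),\upsilon)\,dV_g$ coming from the Jacobi operator of the harmonic (i.e.\ $2$-energy) problem. Next, I substitute K.\ Yano's formula \eqref{31i}, which rewrites $\int_M g(J_{2,id}(\upsilon),\upsilon)\,dV_g$ as $\int_M\bigl\{\tfrac{1}{2}|\mathcal{L}_\upsilon g|^2-(\mathrm{div}\,\upsilon)^2\bigr\}\,dV_g$. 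Collecting the two contributions of $(\mathrm{div}\,\upsilon)^2$ and factoring out $\alpha(1+m)^{\alpha-2}$ yields the identity
\begin{equation}\label{rtghkey}
I_{\alpha,id}(\upsilon,\upsilon)=\alpha(1+m)^{\alpha-2}\int_M\!\bigl\{2(2\alpha-m-3)(\mathrm{div}\,\upsilon)^2+(1+m)|\mathcal{L}_\upsilon g|^2\bigr\}dV_g.
\end{equation}
The whole proof rests on this single algebraic rearrangement.

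Given \eqref{rtghkey}, the first assertion is immediate: if $2\alpha\geq m+3$, then the coefficient $2(2\alpha-m-3)$ is non-negative, the coefficient $(1+m)$ is positive, and the outer factor $\alpha(1+m)^{\alpha-2}$ is positive, so $I_{\alpha,id}(\upsilon,\upsilon)\geq 0$ for every smooth vector field $\upsilon$ on $M$. By Definition \ref{ert}, $M$ is $\alpha$-stable. For the second assertion, I specialize to the borderline case $\alpha=(m+3)/2$, where the coefficient of $(\mathrm{div}\,\upsilon)^2$ vanishes, so \eqref{rtghkey} reduces to
\begin{equation*}
I_{\alpha,id}(V,V)=\alpha(1+m)^{\alpha-1}\int_M|\mathcal{L}_V g|^2\,dV_g.
\end{equation*}
Since the integrand is pointwise non-negative, $I_{\alpha,id}(V,V)=0$ forces $|\mathcal{L}_V g|^2\equiv 0$ on $M$, i.e.\ $\mathcal{L}_V g=0$, meaning $V$ is Killing; conversely, if $V$ is Killing then $\mathcal{L}_V g=0$ and the integral vanishes. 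This gives the desired equivalence.

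The only delicate point is the algebraic bookkeeping leading to \eqref{rtghkey}: one must correctly combine the coefficients $4\alpha(\alpha-1)(1+m)^{\alpha-2}$ and $-2\alpha(1+m)^{\alpha-1}$ in front of $(\mathrm{div}\,\upsilon)^2$ to obtain $2\alpha(1+m)^{\alpha-2}[2(\alpha-1)-(1+m)]=2\alpha(1+m)^{\alpha-2}(2\alpha-m-3)$, which is exactly the expression that isolates the threshold $2\alpha=m+3$. Everything else is an application of Yano's identity and pointwise positivity. No analytic difficulty—such as eigenvalue estimates or the Cauchy–Schwarz step used in Theorem \ref{k3}—is needed here, because Yano's formula already produces the sharp integration-by-parts identity that makes the critical value of $\alpha$ visible.
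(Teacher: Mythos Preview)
Your proof is correct and follows essentially the same route as the paper: both combine the expression \eqref{41} with Yano's identity \eqref{31i} to obtain the closed form \eqref{34rt}, from which the stability for $2\alpha\ge m+3$ and the Killing characterization at $2\alpha=m+3$ are read off directly. Your version is in fact slightly more explicit (and your coefficient $\alpha(1+m)^{\alpha-1}$ in front of $\int_M|\mathcal{L}_V g|^2\,dV_g$ corrects a minor misprint in the paper's \eqref{34rt}).
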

\begin{proof}
By  \eqref{41} and  \eqref{31i},  we get 
\begin{align}\label{34rt}
 &I_{\alpha,id}(V,V)\nonumber \\&=2\alpha (1+m)^{\alpha -2}(2\alpha -m-3)\int _{M}(div(V))^{2}dV_{g}
 \nonumber \\&+2\alpha (1+m)^{\alpha -2}\int _{M}  \dfrac{1}{2} \mid \mathcal{L} _{V}g\mid ^{2}dV_{g}
 \end{align} 
From \eqref{34rt},  Proposition \ref{rtgh} follows.

\end{proof}
\begin{definition}
A Riemannian manifold $(M^{m}, g), n \geq 3, $  is Einstein if its Ricci tensor $Ric$ of type $(0, 2)$ is of the form  $Ric=\lambda g$,   where $\lambda$ is a
constant.
\end{definition}
\begin{remark}
The concept of Einstein manifolds developed as a result of research into exact solutions of the Einstein field equations as well as concerns of quasi-umbilical hypersurfaces. The Robertson-Walker spacetimes, for example, are Einstein manifolds.  Einstein manifolds play an important role in Riemannian Geometry as well as in general theory of relativity.  In geometry,  Einstein manifolds form a natural subclass of various classes of Riemannian manifolds by a curvature condition imposed on their Ricci tensor.
\end{remark}
Now,  the stability of Einstein Riemannian manifold $(M,g)$,  by considering an assumption on the smallest eigenvalue of the Laplacian operating on the functions of $M$,   is studied.
\begin{theorem}
Let 
$(M,g)$
be an Einstein Riemannian manifold whose Ricci tensor equals $\lambda g$ for some constant $\lambda$.  Then,  $M$ is an $\alpha-$stable manifold if and only if $\lambda \leq \mu_{1}(m+2\alpha-1)/(m+1), $
where 
$\mu_{1}$ is the smallest positive eigenvalue of the Laplacian operating on functions. 
\end{theorem}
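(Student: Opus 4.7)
The plan is to evaluate $I_{\alpha,id}$ on an $L^{2}$-orthogonal Hodge decomposition of vector fields and reduce $\alpha$-stability of the identity map to a scalar spectral inequality. First, substituting the Yano identity \eqref{31i} into the second variation formula \eqref{41} eliminates $J_{2,id}$ in favor of the Lie derivative, giving an expression for $I_{\alpha,id}(\upsilon,\upsilon)$ as a linear combination of $\int_{M}(\mathrm{div}\,\upsilon)^{2}\,dV_{g}$ and $\int_{M}|\mathcal{L}_{\upsilon}g|^{2}\,dV_{g}$ with explicit $\alpha$-dependent coefficients. Writing any smooth vector field as $\upsilon = \nabla f + W$ with $\mathrm{div}\,W = 0$, I would then verify that this splitting is $I_{\alpha,id}$-orthogonal: the divergence integrand sees only $\nabla f$, and on the Einstein background the Weitzenb\"ock formula $\nabla^{*}\nabla(df) = d(\Delta f) - \mathrm{Ric}(df)$ combined with $\mathrm{Ric}(\nabla f) = \lambda\nabla f$ shows that $J_{2,id}(\nabla f)$ is itself a gradient, so the cross-pairings with $W$ vanish after one integration by parts.

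The coclosed summand then poses no problem: $\mathrm{div}\,W = 0$ kills the first term and leaves a manifestly non-negative $|\mathcal{L}_{W}g|^{2}$-integral, so $I_{\alpha,id}(W,W) \geq 0$ for every divergence-free $W$. All obstructions to stability therefore come from the gradient part. Expanding $f = \sum_{k} c_{k} f_{k}$ in Laplacian eigenfunctions with $\Delta f_{k} = \mu_{k} f_{k}$, the $L^{2}$-orthogonality of eigenspaces, the identity $\mathrm{div}(\nabla f_{k}) = -\mu_{k} f_{k}$, and the Weitzenb\"ock/Einstein consequence $J_{2,id}(\nabla f_{k}) = (\mu_{k} - 2\lambda)\nabla f_{k}$ together diagonalise the quadratic form over the gradient eigenmodes. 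A standard Bochner computation on the Einstein background yields $\int_{M}|\nabla^{2} f_{k}|^{2}\,dV_{g} = \mu_{k}(\mu_{k}-\lambda)\int_{M} f_{k}^{2}\,dV_{g}$, after which substitution and regrouping produce
\begin{equation*}
I_{\alpha,id}(\nabla f_{k},\nabla f_{k}) \;=\; C(m,\alpha)\,\mu_{k}\,\bigl[(m+2\alpha-1)\mu_{k} - (m+1)\lambda\bigr]\int_{M} f_{k}^{2}\,dV_{g},
\end{equation*}
where $C(m,\alpha)>0$ depends only on $m$ and $\alpha$.

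Stability is therefore equivalent to the bracketed expression being non-negative for every $k\geq 1$, and since it is monotone increasing in $\mu_{k}$ the binding constraint is at $\mu_{k} = \mu_{1}$. Rearranging this case gives precisely the stated criterion $\lambda \leq \mu_{1}(m + 2\alpha - 1)/(m+1)$; for the converse, taking the first eigenfunction $f_{1}$ as a test field exhibits a direction along which $I_{\alpha,id}$ is negative whenever the inequality is violated. The main obstacle will be the Bochner/Weitzenb\"ock bookkeeping on the Einstein background — verifying the $I_{\alpha,id}$-orthogonality of the Hodge splitting and tracking the correct signs and constants when applying Weitzenb\"ock to exact $1$-forms — after which the problem collapses to the single scalar inequality above.
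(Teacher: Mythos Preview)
Your proposal is correct and follows essentially the same route as the paper: Hodge-decompose vector fields into divergence-free plus gradient pieces, observe that on an Einstein manifold $J_{\alpha,id}$ respects this splitting, dispose of the divergence-free summand via Yano's identity, and reduce the gradient part to a scalar spectral condition. The only cosmetic difference is that where you expand gradients in Laplace eigenfunctions and diagonalise $I_{\alpha,id}$ mode-by-mode, the paper appeals directly to the spectral inequality $\int_{M}g(\mathrm{grad}\,\Delta\kappa,\mathrm{grad}\,\kappa)\,dV_{g}\geq\mu_{1}\int_{M}|\mathrm{grad}\,\kappa|^{2}\,dV_{g}$; the two formulations are equivalent and produce the same decisive factor $(m+2\alpha-1)\mu_{1}-(m+1)\lambda$.
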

\begin{proof}
Let 
$\chi(M)$
be the space of all smooth vector fields on $M$.  By the Hodge-Kodaira theory,  we have 
\begin{align}
\chi(M)=&\{X\in \chi(M)\mid div(X)=0\}\nonumber \\ &\oplus \{grad \kappa\mid  \kappa\in C^{\infty}(M) \},
\end{align}
In other words,  the space of the gradients of the functions is the orthogonal complement to the kernel of divergence operator in the space of all vector fields with respect to the $L^{2}$ norm.  Noting that,  $J_{\alpha,id}$ stabilizes this decomposition since the Ricci curvature is a scalar multiple of the Identity.  Hence,  we may work on the vector field 
$X$ with $div X=0$ and the gradients,  $grad \kappa$; that is,  we have  
\begin{align}
&g( J_{\alpha, id}(X+grad \kappa),  X+grad \kappa)\nonumber \\&=g(J_{\alpha, id}(X),X)+g(J_{\alpha, id}(grad \kappa),grad \kappa). 
 \end{align} 
 By \eqref{30i} and \eqref{31i} and considering 
 $div X=0$, 
 we get 
 \begin{align} 
& \int_{M}g(J_{\alpha, id}(X), X)\nonumber\\ &=D\int _{M}(Tr_{g}\nabla \langle \nabla X,  id_{TM}\rangle id_{TM},  X)dV_{g}\nonumber \\&+E\int _{M}h(J_{2,id}(X),  X)dV_{g}
\nonumber \\&=
D\int _{M}
 grad (div X) dV_{g}
\nonumber \\&+E
 \int_{M}\{\dfrac{1}{2} \mid \mathcal{L}_{X}  g\mid ^{2}-(div \,  X)^{2}\}dV_{g} \nonumber \\ &=E \int_{M}\dfrac{1}{2} \mid \mathcal{L}_{X}  g\mid ^{2}dV_{g}\geq 0,
\end{align} 
where $J_{2,id}$ is defined in \eqref{41} and 

$$D:=4\alpha(\alpha -1)(1+m)^{\alpha-2}, \qquad E:=2\alpha(1+m)^{\alpha-1}.$$
Assume that $M$ is $\alpha-$stable 
and 
$\kappa _{1}$ 
is a smooth function  on $M$ such that 
$\Delta \kappa _{1}=\mu_{1}\kappa _{1}$.
 Then we have 
\begin{align}\label{er5}
& \int_{M}g(J_{\alpha, id}(grad  \kappa_{1}), grad  \kappa_{1})\nonumber\\ &=D\int _{M}g(grad (\Delta \kappa_{1}), grad \kappa_{1}) dV_{g}
\nonumber\\ &+E\int _{M}g( grad \Delta\kappa_{1}-\lambda grad \kappa_{1},  grad \kappa_{1}) dV_{g}
\nonumber\\ &= 2\alpha(1+m)^{\alpha-2}(\mu _{1}(m+2\alpha-1)-\lambda(1+m) )\nonumber\\ &\int _{M}\mid grad \kappa_{1} \mid ^{2}
dV_{g}.
\end{align}
By \eqref{er5},  and  
 $\alpha-$stablity of $M$,  
it can be concluded that 
\begin{equation}
 \lambda \leq \mu_{1}(\dfrac{m+2\alpha-1}{m+1}).
 \end{equation}

%\begin{equation}
%\int_{M}g(J_{\alpha, id}(grad  \kappa), grad  \kappa)\geq 0
%\end{equation}
Conversely, 
suppose  that  $\lambda \leq \mu_{1}(m+2\alpha-1)/(m+1)$. 
Due to the fact that 
\begin{align}
&\int _{M}g(grad \Delta \kappa, grad \kappa)dV_{g}\nonumber\\ &\geq \mu_{1}\int _{M}g(grad  \kappa, grad \kappa)dV_{g},
\end{align}
for every $\kappa \in C^{\infty}(M)$,   and considering \eqref{30i} and \eqref{41},  we have 
\begin{align}\label{er4}
&\int_{M}g(J_{\alpha, id}(grad  \, \kappa), grad \,  \kappa)
\nonumber\\ &
%= D\int _{M}g(grad (div (grad \, \kappa)), grad\,  \kappa) dV_{g}
%\nonumber\\ &+E\int _{M}g(\Delta (grad\,  \kappa)-\lambda grad\,  \kappa, grad \, \kappa) dV_{g}
\nonumber\\ &=
D\int _{M}g(grad (\Delta\,  \kappa)), grad\,  \kappa) dV_{g}
\nonumber\\ &+E\int _{M}g( grad \, \Delta\kappa\nonumber -\lambda\,  grad \, \kappa, grad \,  \kappa) dV_{g}
\nonumber\\ &\geq 2\alpha(1+m)^{\alpha-2}(\mu _{1}(m+2\alpha-1)-\lambda(1+m) )\nonumber\\ &\int _{M}\mid grad\,  \kappa\mid ^{2} dV_{g}.
\end{align}

%and 
%$\kappa _{1}$ 
%be a smooth function such that 
%$\Delta \kappa _{1}=\mu_{1}\kappa _{1}$.
% Then we have 
%\begin{align}\label{er5}
%& \int_{M}g(J_{\alpha, id}(grad  \kappa_{1}), grad  \kappa_{1})\nonumber\\ &=D\int _{M}g(grad (\Delta \kappa_{1}), grad \kappa_{1}) dV_{g}
%\nonumber\\ &+E\int _{M}g( grad \Delta\kappa_{1}-\lambda grad \kappa_{1},  grad \kappa_{1}) dV_{g}
%\nonumber\\ &= 2\alpha(1+m)^{\alpha-2}(\mu _{1}(m+2\alpha-1)-\lambda(1+m) )\nonumber\\ &\int _{M}\mid grad \kappa_{1} \mid ^{2}
%dV_{g}.
%\end{align}
Since $\lambda \leq \mu_{1}(m+2\alpha-1)/(m+1)$,   equation \eqref{er4} implies  that 
\begin{equation}
\int_{M}g(J_{\alpha, id}(grad  \kappa_{1}), grad  \kappa_{1})\geq 0,
\end{equation}
and hence completes the proof. 
\end{proof}

\section*{Conflicts of Interest}
The authors declare that there are no conflicts of interest regarding the publication of this paper.

%\bibliographystyle{styleone}
%\bibliography{myref2}

\end{document}